\tikzset{
  blackvertex/.style={circle, draw=black!100,fill=black!100,thick, inner sep=0pt, minimum size=1mm},  
  whitevertex/.style={circle, draw=black!100,fill=white!100,thick, inner sep=0pt, minimum size=1mm},  
  smallblack/.style={circle, draw=black!100,fill=black!100,thick, inner sep=0pt, minimum size=1mm},  
}
\newtheorem{observation}{Observation}
\begin{document}
\title{Min orderings and list homomorphism dichotomies for signed and unsigned graphs}
\titlerunning{Min orderings and list homomorphism dichotomies}

\author{Jan Bok\inst{1}\orcidID{0000-0002-7973-1361} \and Richard C Brewster\inst{2}\orcidID{0000-0001-7237-4288} \and Pavol Hell\inst{3}\orcidID{0000-0001-7609-9746} \and Nikola Jedli\v{c}kov\' a\inst{4}\orcidID{0000-0001-9518-6386} \and Arash Rafiey\inst{5}}
\authorrunning{Bok, Brewster, Hell, Jedli\v{c}kov\' a, and Rafiey}

\institute{
Computer Science Institute, Faculty of Mathematics and Physics, Charles University, Prague, Czech Republic, \url{bok@iuuk.mff.cuni.cz}
\and
Department of Mathematics and Statistics, Thompson Rivers University, Canada, \url{rbrewster@tru.ca}
\and
School of Computing Science, Simon Fraser University, Canada, \url{pavol@cs.sfu.ca}
\and
Department of Applied Mathematics, Faculty of Mathematics and Physics, Charles University, Prague, Czech Republic, \url{jedlickova@kam.mff.cuni.cz}
\and
Mathematics and Computer Science, Indiana State University, Terre Haute, Indiana, USA, \url{arash.rafiey@indstate.edu}
}

\maketitle

\begin{abstract}
The CSP dichotomy conjecture has been recently established, but a number of other dichotomy questions remain open, including the dichotomy classification of list homomorphism problems for signed graphs. Signed graphs arise naturally in many contexts, including for instance nowhere-zero flows for graphs embedded in non-orientable surfaces. For a fixed signed graph $\widehat{H}$, the list homomorphism problem asks whether an input signed graph $\widehat{G}$ with lists $L(v) \subseteq V(\widehat{H}), v \in V(\widehat{G}),$ admits a homomorphism $f$ to $\widehat{H}$ with all $f(v) \in L(v), v \in V(\widehat{G})$.
Usually, a dichotomy classification is easier to obtain for list homomorphisms than for homomorphisms, but in the context of signed graphs a structural classification of the complexity of list homomorphism problems has not even been conjectured, even though the classification of the complexity of homomorphism problems is known. 
Kim and Siggers have conjectured a structural classification in the special case of ``weakly balanced" signed graphs. We confirm their conjecture for reflexive and irreflexive signed graphs; this generalizes previous results on weakly balanced signed trees, and weakly balanced separable signed graphs \cite{separable,trees}. In the reflexive case, the result was first presented in \cite{KS}, with the proof using some of our results included in this paper. In fact, here we present our full proof, as an alternative to the proof in \cite{KS}. In particular, we provide direct polynomial algorithms where previously algorithms relied on general dichotomy theorems. The irreflexive results are new, and their proof depends on first deriving a theorem on extensions of min orderings of (unsigned) bipartite graphs, which is interesting on its own. 
In both cases the dichotomy classification depends on a result linking the absence of certain structures called chains and invertible pairs to the existence of a so-called special min ordering. The structures are used to prove the NP-completeness and the ordering is used to design polynomial algorithms. We also describe cases where the absence of these structures is replaced by a concrete list of forbidden induced subgraphs.
\end{abstract}

\section{Introduction}

The CSP Dichotomy Theorem~\cite{bulatov,zhuk} guarantees that each homomorphism problem for a fixed template relational structure ${\bf H}$ (``does a corresponding input relational structure ${\bf G}$ admit a homomorphism to ${\bf H}$?") is either polynomial-time solvable or NP-complete, the distinction being whether or not the structure ${\bf H}$ admits a certain symmetry. In the context of graphs ${\bf H}=H$, there is a more natural structural distinction, namely the tractable problems correspond to the graphs $H$ that have a loop, or are bipartite~\cite{hn}. For list homomorphisms (when each vertex $v \in V(G)$ has a list $L(v) \subseteq V(H)$), the distinction turns out to be whether or not $H$ is a ``bi-arc graph", a notion related to interval graphs \cite{feder2003bi}. In the special case of bipartite graphs $H$, the distinction is whether or not $H$ has a min ordering. A {\em min ordering of a bipartite graph} with parts $A, B$ is a pair of linear orders $<_A, <_B$ of $A$ and $B$ respectively, such that if there are edges $ab, a'b'$ with $a \in A, a' \in A, a < a'$ and $b \in B, b' \in B, b' < b$, then there is also the edge $ab'$. If a bipartite graph $H$ has a min ordering, then the list homomorphism problem to $H$ is polynomial-time solvable; otherwise it is NP-complete~\cite{feder1999list,esa}. 

A similar situation occurs for reflexive graphs, where the distinction is similar, but the definition of a min ordering is slightly different. A {\em min ordering of a reflexive graph} $H$ is a linear order $<$ of $V(H)$, such that if there are edges $uv, u'v' \in E(H)$ with $u < u'$ and $v' < v$, then there is also the edge $uv'$. (It is possible to interpret the two kinds of min orderings as special cases of a general min ordering for digraphs, but it will be simpler for our purposes to use these two distinct definitions.) If a reflexive graph $H$ has a min ordering, then the list homomorphism problem to $H$ is polynomial-time solvable; otherwise it is NP-complete~\cite{feder2012interval}. 

In both cases, there is an obstruction characterization of the situation when a min ordering exists. An {\em invertible pair in a reflexive graph} $H$ is a pair $(u,u')$ of vertices of $H$, with a pair of walks $u = v_1, v_2, \dots, v_k = u'$ and $u' = v'_{1}, v'_{2}, \dots, v'_{k} = u$ of equal length, and another pair of walks $u' = w_1, w_2, \dots, w_m = u$ and $u = w'_{1}, w'_{2}, \dots, w'_{m} = u'$ of equal length, such that each $v_i$ is non-adjacent to $v'_{i+1}$ for all $i = 1, 2, \dots, k-1$ and each $w_j$ is non-adjacent to $w'_{j+1}$, for all $j = 1, 2, \dots, m-1$. An {\em invertible pair in a bipartite graph} $H$ with parts $A, B$ is defined exactly in the same way, but with the condition that $u, u'$ belong to the same part ($A$ or $B$). It is easy to see that if an invertible pair exists, then there can be no min ordering (both for the reflexive and the bipartite cases). The converse also holds for both cases. For the reflexive case, this is shown in~\cite{feder2012interval}. In fact, the proof in this case (see the proof of Theorem 3.2 in \cite{feder2012interval}) implies a stronger result --- namely, if a set of ordered pairs of vertices does not violate transitivity, then it can be extended to a min ordering if and only if it contains no invertible pair. (A set of ordered pairs is said to violate transitivity if it contains some pairs $(t_0,t_1),(t_1,t_2), (t_2,t_3), \dots, (t_{k-1},t_k),(t_k,t_0)$ with $t_0 < t_1 < \dots < t_k < t_0$.) For the bipartite case, the converse is proved in~\cite{esa}; however, this is done by a reduction to the reflexive case, and there is no analogue for extending a given set of ordered pairs. In fact, such a result was not known for bipartite graphs.

In this paper, we fill the gap and prove an analogous extension version of the min ordering characterization for bipartite graphs, Corollary~\ref{ext}. This result is then used in the following section to prove the bipartite case of the conjecture of Kim and Siggers. 

Since the reflexive case already had an extension result in~\cite{feder2012interval}, we can apply a similar method to prove the conjecture for reflexive graphs. In the early versions of \cite{KS} Kim and Siggers presented a proof for the reflexive result, and in the first version of our arXiv paper we suggested an outline of an alternative method of proof. The last version (v4) of \cite{KS} now uses the main part of our alternative proof, so here we describe our method in detail.

A {\em signed graph} $\widehat{H}$ is a graph $H$ together with an assignment of {\em signs} $+, -$ to the edges of $H$. Edges may be assigned both signs, or equivalently, there may be two parallel edges with opposite signs between the same two vertices. (There may be edges that are loops, and there may also be two parallel loops of opposite signs at the same vertex.) Edges with a $+$ sign are called {\em positive}, or {\em blue}, edges with a $-$ sign are called {\em negative}, or {\em red}. Edges with both signs are called {\em bicoloured}, while purely red or purely blue edges are called {\em unicoloured}. Two signed graphs are called {\em switch-equivalent} if one can be obtained from the other by a sequence of vertex switchings, where a {\em switching} at a vertex $v$ flips the signs of all edges incident with $v$. (A bicoloured edge remains bicoloured.) Signed graphs arise in many contexts in mathematics and in applications.
%A Mathematical Bibliography of Signedand Gain Graphs and Allied Areas, dynamic survey
This includes knot theory, qualitative matrix theory, gain graphs, psychosociology, chemistry, and statistical physics~\cite{zavsurvey}. In graph theory, they are of particular interest in nowhere-zero flows for graphs embedded in non-orientable surfaces~\cite{cisar}.
%\textbf{Nowhere-zero flows in signed graphs: A survey, Kaiser, Lukotka, Rollova}

A {\em homomorphism} of a signed graph $\widehat{G}$ to a signed graph $\widehat{H}$ is a vertex mapping $f$ which is a sign-preserving homomorphism of $\widehat{G'}$ to $\widehat{H}$ for some signed graph $\widehat{G'}$ switch-equivalent to $\widehat{G}$. Equivalently, a homomorphism of a signed graph $\widehat{G}$ to a signed graph $\widehat{H}$ is a homomorphism $f$ of the underlying graph $G$ of $\widehat{G}$ to the underlying graph $H$ of $\widehat{H}$, which maps bicoloured edges of $\widehat{G}$ to bicoloured edges of $\widehat{H}$, and for which any unicoloured closed walk $W$ in $\widehat{G}$ with unicoloured image $f(W)$ in $\widehat{H}$ has the same product of the signs of its edges. (In other words, closed walks with only unicoloured edges map to closed walks that either contain a bicoloured edge or have the same parity of the number of negative edges.)  We will use this definition in the last section, as it does not require switching in the input graph before mapping it. The equivalence of the two definitions follows from the theorem of Zaslavsky~\cite{zav82b}, and the actual switching required for $\widehat{G}$ before the mapping if one exists, as well as the two violating closed walks if such a mapping doesn't exist, can be found in polynomial time~\cite{rezazasla}. 

The study of homomorphisms of signed graphs was pioneered by Guenin~\cite{guenin} and introduced more systematically by Naserasr, Rollová, and Sopena, see the survey~\cite{rezazasla}.

The {\em homomorphism problem} for the signed graph $\widehat{H}$ asks whether an input signed graph $\widehat{G}$ admits a homomorphism to $\widehat{H}$. The {\em s-core} of a signed graph $\widehat{H}$ is the smallest homomorphic image of $\widehat{H}$ that is a subgraph of $\widehat{H}$. (The s-core is unique up to isomorphism~\cite{BFHN}.) It was conjectured in~\cite{BFHN} that the homomorphism problem for $\widehat{H}$ is polynomial if the s-core of $\widehat{H}$ has at most two edges (a bicoloured edge counts as two edges), and is NP-complete otherwise. The conjecture was verified in~\cite{BFHN} for all signed graphs that do not simultaneously contain a bicoloured edge and a unicoloured loop of each colour. Finally, the full conjecture was established in~\cite{dichotomy}.

The {\em list homomorphism problem} for a signed graph $\widehat{H}$ asks whether an input signed graph $\widehat{G}$ with lists $L(v) \subseteq V(\widehat{H}), v \in V(\widehat{G}),$ admits a homomorphism $f$ to $\widehat{H}$ with all $f(v) \in L(v), v \in V(\widehat{G})$. The complexity classification for these list homomorphism problems appears to be difficult, and no structural classification conjecture  has arisen. (Even though these are not directly CSP problems, the fact that dichotomy holds can be derived from the CSP Dichotomy Theorem.) Some special cases have been treated~\cite{separable,mfcs,Bordeaux,KS}, including a full classification for signed trees~\cite{trees}. 

In~\cite{KS}, H. Kim and M.H. Siggers focus on a special class of signed graphs: we say that a signed graph $\widehat{H}$ is {\em weakly balanced} if any closed walk of unicoloured edges has an even number of negative edges. Equivalently, there is a switch-equivalent signed graph $\widehat{H'}$ in which there are no purely red edges~\cite{trees}. Our terminology comes from~\cite{trees}, in~\cite{KS} these signed graphs correspond to the so-called {\em pr-graphs}. We also note that while balanced signed graphs are sometimes called bipartite signed graphs, our weakly balanced graphs bear no relation to the weakly bipartite signed graphs studied in \cite{guenin,schrijver}.

Kim and Siggers~\cite{KS} conjectured a classification of the complexity of the list homomorphism problems for weakly balanced signed graphs $\widehat{H}$, and announced it holds in the special case of signed graphs that are {\em reflexive} (each vertex has at least one loop). In the last version of \cite{KS} they use a result from this paper for a proof (see the footnote on page 4 of \cite{KS}, version v4). Their paper also highlights the importance of irreflexive signed graphs, by reducing parts of the problem for general signed graphs to their bipartite translations. Their conjecture is particularly elegant when stated for irreflexive signed graphs. (We note that non-bipartite irreflexive signed graphs are not relevant because their list homomorphism problems are NP-complete by~\cite{hn}; it is also easy to see that they always contain an invertible pair.)

To be specific, we assume that $\widehat{H}$ is a bipartite signed graph without purely red edges, and define a {\em special min ordering} of $\widehat{H}$ to be a min ordering of the underlying graph $H$ of $\widehat{H}$, such that at each vertex its bicoloured neighbours precede its unicoloured neighbours. The conjectured classification for weakly balanced signed graphs states that the list homomorphism problem for $\widehat{H}$ is polynomial-time solvable if $\widehat{H}$ has a special min ordering, and is NP-complete otherwise. 

This implies that there are two natural obstructions to $\widehat{H}$ having a polynomial-time solvable list homomorphism problem -- namely invertible pairs, which obstruct the existence of a min ordering, and chains, which obstruct a min ordering from being made special. {\em Invertible pairs} are defined above for unsigned bipartite graphs, and for signed bipartite graphs they are just invertible pairs in the underlying unsigned graph. A {\em chain} in a signed graph $\widehat{H}$ consists of two walks of equal length, a walk $U$ with vertices $u = u_0, u_1, \dots, u_k = v$ and a walk $D$, with vertices $u = d_0, d_1, \dots,d_k = v$ such that the edges $uu_1, d_{k-1}v$ are unicoloured, and the edges $ud_1, u_{k-1}v$ are bicoloured, and for each $i, 1 \leq i \leq k-2$, we have both $u_iu_{i+1}$ and $d_id_{i+1}$ edges of $H$ while $d_iu_{i+1}$ is not an edge of $H$, or both $u_iu_{i+1}$ and $d_id_{i+1}$ bicoloured edges of $H$ while $d_iu_{i+1}$ is not a bicoloured edge of $H$. See Figure~\ref{fig:invpair} for an example.

\begin{figure}
\center
\includegraphics[scale=0.9]{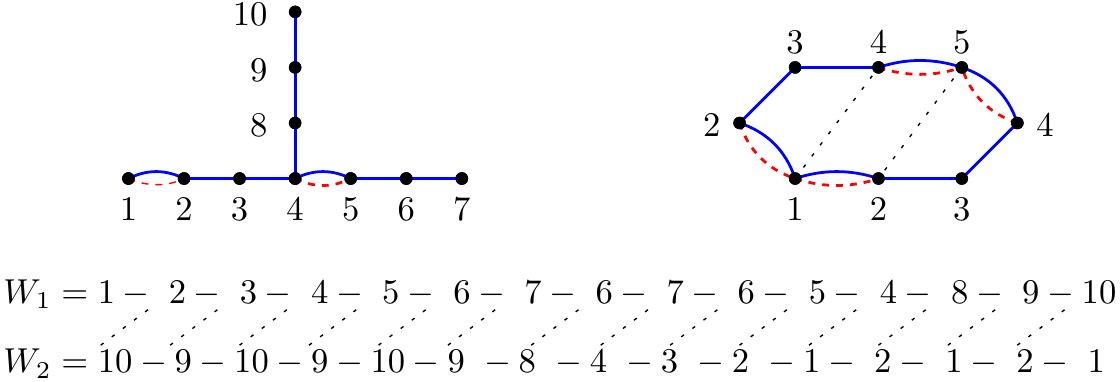}
\caption{An example of a signed graph (on the left) with a chain (on the right) and an invertible pair $(1,10)$ certified by the pair of walks $W_1$, $W_2$ and the pair consisting of the reverse of both walks.}
\label{fig:invpair}
\end{figure}

Kim and Siggers also conjectured that a weakly balanced signed graph $\widehat{H}$ has a special min ordering if and only if it has no invertible pairs and no chains. We prove both conjectures  (cf. Theorem~\ref{main} below), in the case of irreflexive and reflexive signed graphs. In both cases, the result for signed graphs is derived using the extension results for unsigned graphs. While for reflexive graphs the extension result is obvious from the proof of~\cite{feder2012interval}, we provide a new proof in the irreflexive case in the next section.

In this journal version of our conference paper \cite{latin} we have added a section characterizing reflexive graphs with a min ordering by the absence of chains and invertible pairs, a section with a simple direct algorithm for the polynomial cases, and a section applying our results to obtain the concrete structure (via forbidden subgraphs) of the polynomial cases for certain special classes of bipartite weakly balanced signed graphs.

\section{Min orderings of (unsigned) bipartite graphs}

In this section we only deal with unsigned bipartite graphs $H$, with a fixed bipartition $A, B$. The {\em pair digraph} $H^+$ has as vertices all ordered pairs of distinct equicoloured vertices of $H$, i.e., $V(H^+) = \{(a,a') : a, a' \in A, a \neq a'\} \cup \{(b,b') : b, b' \in B, b \neq b'\}$. There is in $H^+$ an arc from $(a,a')$ to $(b,b')$ if and only if $ab, a'b'$ are edges of $H$ while $ab'$ is not an edge of $H$. In that case we also say that $(a,a')$ \emph{dominates} $(b,b')$. We note that $(a,a')$ dominates $(b,b')$ if and only if $(b',b)$ dominates $(a',a)$, a property we call {\em skew symmetry} of $H^+$. We also note that $(a,a')$ is an invertible pair if and only if $(a,a')$ and $(a',a)$ are in the same strong component of $H^+$. 

\begin{theorem} \label{3}
The following statements are equivalent for a bipartite graph $H$:
\begin{enumerate}
\item
$H$ has a min ordering.
\item
$H$ has no invertible pairs.
\item
The vertices of $H^+$ can be partitioned into sets $D, D'$ such that
\begin{enumerate}
\item
$(x,y) \in D$  if and only if  $(y,x) \in D'$,
\item
$(x,y) \in D$  and  $(x,y)$  dominates  $(x',y')$  in  $H^+$  implies  $(x',y') \in D$,
\item
$(x,y), (y,z) \in D$  implies  $(x,z) \in D$.
\end{enumerate}
\end{enumerate}
\end{theorem}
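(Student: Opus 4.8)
The plan is to prove (1)$\Leftrightarrow$(3), the easy implication (3)$\Rightarrow$(2), and finally the substantial implication (2)$\Rightarrow$(3), reserving essentially all the effort for the last one. I would first observe that statement (3) is just statement (1) rewritten in the language of $H^+$. Given a partition as in (3), define on each part the relation $x<y$ iff $(x,y)\in D$; condition (a) makes this relation total and antisymmetric, condition (c) makes it transitive, so $<$ is a linear order of $A$ and of $B$, and condition (b) is exactly the assertion that $<$ is a min ordering, since $(a,a')$ dominating $(b,b')$ together with $a<a'$ forcing $b<b'$ is precisely the contrapositive of the defining implication of a min ordering. The converse is identical: from a min ordering set $D=\{(x,y):x<y\}$ and $D'=\{(x,y):y<x\}$ and verify (a)--(c), where (b) is again the min property applied to the two edges witnessing a domination. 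This yields (1)$\Leftrightarrow$(3) with no real work.

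Next I would dispose of (3)$\Rightarrow$(2). If $(x,y)$ were an invertible pair, then $(x,y)$ and $(y,x)$ lie in one strong component of $H^+$, so each is reachable from the other along domination arcs. By (a) one of them, say $(x,y)$, lies in $D$; iterating the closure property (b) along a directed path from $(x,y)$ to $(y,x)$ forces $(y,x)\in D$, contradicting (a). Thus (3) forbids invertible pairs, and (1)$\Rightarrow$(2) follows as already remarked in the introduction.

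The substance of the theorem is (2)$\Rightarrow$(3): from the absence of invertible pairs I must manufacture a partition satisfying (a), (b) and (c). The skeleton is the condensation of $H^+$ together with the skew-symmetric involution $\sigma:(x,y)\mapsto(y,x)$, which reverses every arc and hence acts on the condensation reversing its reachability order. The no-invertible-pairs hypothesis says exactly that $\sigma$ fixes no strong component and that no two components $S$ and $\sigma S$ are mutually reachable, so the strong components split into $\sigma$-orbits of size two lying on opposite sides of the reachability order. Taking a $\sigma$-reversing linear extension of the condensation (built by repeatedly removing a minimal component together with its maximal $\sigma$-image) and letting $D$ be its upper half produces a set that is a transversal of these orbits, giving (a), and is closed under out-arcs, giving (b).

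The hard part is condition (c). Transitivity is genuinely independent of the reachability structure of $H^+$, because the three pairs $(x,y),(y,z),(x,z)$ carry no domination arcs among one another, so an arbitrary transversal up-set will in general fail to be transitive; this is why a purely topological choice of $D$ does not suffice. To secure (c) I would not choose a transversal all at once but build $D$ incrementally, in the spirit of the extension argument of~\cite{feder2012interval}: maintain a set that is simultaneously closed under domination and under transitivity and that contains no pair together with its reverse, then repeatedly orient an as-yet-undecided pair and close up under both operations. The crux is to show that at each step at least one of the two orientations preserves the absence of a pair and its reverse; were both to fail, the two forced derivations, recombined through the skew symmetry $\sigma$, would place $(x,y)$ and $(y,x)$ in a common strong component, i.e.\ exhibit an invertible pair, contradicting (2). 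Carrying this out while transitivity steps (which are \emph{not} domination arcs of $H^+$) are interleaved with domination steps is the technical heart of the argument, and it is exactly this extension statement that will yield Corollary~\ref{ext}.
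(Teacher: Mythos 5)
Your treatment of the easy implications is fine and matches the paper: (3)$\Rightarrow$(1) by reading $D$ as the order, (1)$\Rightarrow$(3) by reading the order as $D$, and (3)$\Rightarrow$(2) by pushing a pair around its strong component with condition (b). The problem is that for (2)$\Rightarrow$(3) your proposal stops exactly where the theorem begins. The entire content of the claim is the assertion you defer: that when a partial set closed under domination \emph{and} transitivity is extended by one orientation of an undecided pair, the two possible orientations cannot both force a pair together with its reverse. Your stated reason --- that the two failed derivations ``recombine through the skew symmetry $\sigma$'' into an invertible pair --- is not a proof and does not obviously work, for the reason you yourself flag: an invertible pair means $(x,y)$ and $(y,x)$ lie in one strong component of $H^+$, i.e.\ are joined in both directions by \emph{domination arcs alone}, whereas your forced derivations interleave transitivity steps, which are not arcs of $H^+$. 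Skew symmetry maps a domination step to a reversed domination step and a transitivity step to a reversed transitivity step, but a mixed derivation does not compose into the two pairs of walks that the definition of an invertible pair requires; converting mixed derivations into pure domination walks is precisely the missing combinatorial work. Note also that appealing to the ``spirit of'' \cite{feder2012interval} presumes what is to be shown: the paper points out that for bipartite graphs no such extension statement was known, the earlier bipartite result having been obtained only by reduction to the reflexive case.

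For contrast, the paper's proof of (2)$\Rightarrow$(3) does not orient one pair at a time at all. It adds \emph{entire strong components} of $H^+$ in ``ripe'' order (a component is added only when it has no arc leaving the decided region), which makes closure under domination automatic, and it maintains acyclicity (``no circuit'') rather than transitivity --- transitivity of the final $D$ follows only at the end, from totality plus the absence of circuits. The real work is then showing that if adding a ripe component $C$ creates a circuit $(x_0,x_1),\dots,(x_n,x_0)$, some other admissible addition exists: either a sink pair $(x_i,x_{i+2})$ (Cases 1 and 3, via Observation~\ref{last}), or the coupled component $C'$ (Case 2, via the system of pairwise independent edges $x_iy_i$, Observations~\ref{cl1}--\ref{cl3}, and the biclique $K\cup K'$). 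That structural analysis of circuits in $C\cup D$ is the substance of Theorem~\ref{3} and of Corollary~\ref{ext}, and it has no counterpart in your outline; as it stands, your proposal is a plan that restates the difficulty rather than a proof.
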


\begin{proof} We may assume that $H$ is connected, in particular it has no isolated vertices.

It is straightforward to see that 1 implies 2, and 3 implies 1 (by defining $x < y$ if $(x,y) \in D$). Thus it remains to show that 2 implies 3.

Therefore, we assume that $H$ has no invertible pairs. Note that for each strong component $C$ of $H^+$, there is a corresponding reversed strong component $C'$ whose pairs are precisely the reversed pairs of the pairs in $C$; we shall say that $C, C'$ are {\em coupled} strong components. Note that a strong component $C$ may be coupled with itself --- it is easy to check that all pairs in a self-coupled component are invertible.

The partition of $V(H^+)$ into $D, D'$ will correspond to separating each pair of coupled strong components $C, C'$ of $H^+$. The vertices of one strong component will be placed in the set $D$, their reversed pairs will go to $D'$. We wish to make these choices without having a {\em circuit} in $D$, i.e., a sequence of pairs $(x_0,x_1), (x_1,x_2), \dots, (x_n,x_0) \in D$. Note that the vertices in any circuit are either all in $A$ or all in $B$. We will build these sets $D, D'$ iteratively, making sure they satisfy the following properties.
\begin{itemize}
\item[(i)]
There is no circuit in $D$; 
\item[(ii)]
each strong component of $H^+$ belongs entirely to $D$, $D'$, or to $V(H^+) - D - D'$; 
\item[(iii)]
the pairs in $D'$ are precisely the reversed pairs of the pairs in $D$; 
\item[(iv)]
there is no arc of $H^+$ from $D$ to a vertex outside of $D$. 
\end{itemize}
Initially, we can choose $D, D'$ to be any sets satisfying these properties, and at each iterative step we add one strong component to $D$ and its coupled component to $D'$. This will terminate when each strong component is in $D$ or $D'$, i.e., when each pair $(x,y)$ with $x \neq y$ belongs either to $D$ or to $D'$. Since the final $D$ has no circuit, it satisfies the transitivity property from statement 3 (c), and because of (iii), it satisfies 3(a). Moreover, (iv) implies it satisfies 3(b).

Let $C$ be a strong component of $H^+$. We say that $C$ is {\em trivial} if it consists of just one pair. We say that $C$ is {\em ripe} if it has no arc {\em to} another strong component in $H^+ - D - D'$. Note that whether $C$ is ripe or not depends on what is currently in $D$, and hence strong components become ripe as $D$ gets larger.
We say that a pair $(a,b)$ is a {\em sink pair} if $N(a)$ contains $N(b)$. Note that there are no arcs in $H^+$ from a sink pair $(a,b)$, and so in particular it forms a trivial strong component which is ripe for all sets $D$. 

In the general step, our algorithm shall choose a strong component $C$ that is currently ripe, add all of its pairs to $D$, and add all pairs of $C'$ to $D'$. Note that this process is guaranteed to maintain the validity of (ii), (iii), (iv), but it may fail (i), creating a circuit in $D$. However, we will prove that there always exists a ripe strong component $C$ that can be added without creating a circuit in $D$. In fact, the first failed choice will identify a subsequent choice that will be guaranteed to succeed.

Thus assume we chose $C$ to be an arbitrary ripe strong component of the graph on $V(H^+) - D - D'$. If $C \cup D$ has no circuit, we can add the pairs in $C$ to $D$ and the pairs in $C'$ to $D'$. Otherwise, suppose $(x_0,x_1), (x_1,x_2), \dots, (x_n,x_0)$ is a shortest circuit in $C \cup D$. (Subscripts in the vertices in the circuit will be treated modulo $n+1$.)

Since there are no invertible pairs, and since we never place both a pair and its reverse in $D$, we must have $n \geq 2$. We may assume 
without loss of generality that $(x_n,x_0) \in C$; note that other pairs of the circuit could also be in $C$.

\begin{observation} \label{last} 
If a pair $(x_i,x_{i+1})$ in the circuit lies in a non-trivial strong component and the next pair $(x_{i+1},x_{i+2})$ lies in a trivial strong component, then $(x_{i+1},x_{i+2})$ is a sink pair.
\end{observation}

Since $(x_i,x_{i+1})$ lies in a non-trivial strong component, it is easy to see that there are two edges $x_ip, x_{i+1}q$ in $H$ such that $x_iq, x_{i+1}p$ are non-edges. (If $(x_i,x_{i+1})$ dominates $(a,b)$ and is dominated by $(u,v)$, then set $p=u, q=b$.) We say that $x_ip, x_{i+1}q$ are {\em independent edges} in $H$. If $(x_{i+1},x_{i+2})$ is not a sink pair it dominates some $(q,r)$. (If it dominates some $(q',r)$, it also dominates $(q,r)$.) Therefore, $px_{i+2}$ is not an edge of $H$, else $(p,q)$ would dominate $(x_{i+2},x_{i+1})$, putting it in $D$ (since $(p,q)$ is in $D$), contradicting the minimality of our circuit. It follows that $qx_{i+2}$ is also not an edge of $H$, else $(p,q)$ would dominate $(x_i,x_{i+2})$ which must then lie in $D$, again contradicting the minimality of the circuit. Therefore in this case $(x_{i+1},x_{i+2})$ lies in two independent edges $qx_{i+1}, rx_{i+2}$, and hence belongs to a non-trivial component as claimed. Note that it follows from this that {\em if $x_ip, x_{i+1}q$
are independent edges, then $px_{i+2}, qx_{i+2}$ are non-edges.}

\vspace{2mm}

\noindent {\bf Case 1.} Assume each pair in the circuit belongs to a trivial component. We claim that in this case some $(x_i,x_{i+2})$ lies in a trivial ripe component $C^*$ which can be added to $D$ without creating a circuit. Note that it suffices to prove that $(x_i,x_{i+2})$ forms a trivial 
ripe component, say $X$. Indeed, $X$ could not be part of $D$ by the assumed minimality of the circuit $(x_0,x_1), (x_1,x_2), \dots, (x_n,x_0)$ in $C \cup D$. Moreover, adding $X$ to $D$ cannot creat a circuit; if such a circuit was, say $(x_i,x_{i+2})$, $(x_{i+2},t_1)$, $(t_1,t_2), \dots, (t_k,x_i)$, then there was already a circuit in $D$, namely $(x_i,x_{i+1})$, $(x_{i+1},x_{i+2})$, $(x_{i+2},t_1)$, $(t_1,t_2), \dots, (t_k,x_i)$, contrary to assumption. (Note that since $(x_i,x_{i+2})$ is a trivial component, adding  component $X$ amounts to adding only that pair.)

If all $(x_i,x_{i+1})$ are sink pairs, then $(x_0,x_2)$ is also a sink pair, because $N(x_2) \subseteq N(x_1) \subseteq N(x_0)$. 
Otherwise, some $(x_i,x_{i+1})$ is not a sink pair, say $(x_i,x_{i+1})$ dominates $(p,q)$. Then $x_iq$ is not an edge; on the 
other hand, $x_{i+1}p$ must be an edge, since $(x_i,x_{i+1})$ lies in a trivial component, and this is true for any neighbour 
$p$ of $x_i$. To prove the claim, we shall show that any $(p,r)$ dominated by $(x_i,x_{i+2})$ must lie in $D$, whence 
$(x_i,x_{i+2})$ lies in a ripe component. If $x_{i+1}r$ is an edge, then $(x_i,x_{i+1})$ dominates $(p,r)$ and hence 
$(p,r) \in D$. If $x_{i+1}r$ is not an edge, then $(x_{i+1},x_{i+2})$ dominates $(p,r)$ and hence $(p,r) \in D$. Moreover, $x_{i+2}p$ must be an edge (for any $p \in N(x_i)$), else $(p,r)$ dominates $(x_i,x_{i+2})$ which is then also in $D$, contradicting the minimality of our circuit $(x_0,x_1), (x_1,x_2), \dots, (x_n,x_0)$.

\vspace{2mm}

\noindent {\bf Case 2.} Assume each pair in the circuit belongs to a non-trivial component. 

We first claim that there exists a set of mutually independent edges $x_0y_0$,$x_1y_1$, $ \dots, x_ny_n$. We have already seen (see comment right after Observation~\ref{last}) that there exist 
independent edges $x_0y_0,$ $x_1y_1$, so let $x_0y_0, x_1y_1, \dots, x_ky_k$ be independent edges and $k < n$.  We note that $y_0x_{k+1}$ cannot be an edge, otherwise $(y_0,y_1)$ (which is in $D$ because it is dominated by $(x_0,x_1) \in D$) dominates $(x_{k+1},x_1)$, completing a shorter circuit in $C \cup D$. Similarly, $y_1x_{k+1}$ cannot be an edge, otherwise $(y_1,y_2) (\in D)$ dominates $(x_{k+1},x_2)$, also completing a shorter circuit. Continuing this way, we conclude $y_kx_{k+1}$ cannot be an edge, else $(y_{k-1},y_k)$ (which is in $D$) dominates $(x_{k-1},x_{k+1})$, also yielding a shorter circuit. Since $x_{k+1}$ is not adjacent to any of $y_0, \dots, y_k$, and there are no isolated vertices, there exists a vertex different from $y_0, \dots, y_k$ that is adjacent to $x_{k+1}$; let that vertex be $y_{k+1}$. Analogously, $y_{k+1}$ is not adjacent to $x_0, x_1, \dots, x_k$, so that $x_0y_0, x_1y_1, \dots, x_ky_k, x_{k+1}y_{k+1}$ is also an
independent set of edges, and by induction on $k$ we obtain an independent set of edges $x_0y_0, x_1y_1, \dots, x_ny_n$.

\begin{observation}\label{cl1} Any vertex $p$ adjacent to at least two of the vertices $x_0, x_1, \ldots, x_n$ is adjacent to all of them, and any vertex $q$ adjacent to at least two of the vertices $y_0, y_1, \dots, y_n$ is adjacent to all of them. 
\end{observation}

Otherwise, there is an index $j$ such that $p$ is not adjacent to $x_j$ but is adjacent to $x_{j+1}$, and an index $k \neq j+1$ such that $p$ is adjacent to $x_k$. Then the pair $(y_j,p)$ is dominated by the pair $(x_j,x_{j+1})$ (which is in $D$), and dominates the pair $(x_j,x_k)$. Note that we have $j \neq k-1$ and by the definition of $D$ and $D'$ also $j \neq k+1$. Thus there are two possible non-trivial cases ($n > 2$): either $k+1 < j \leq n$, or $0 \leq j < k-1$. In both cases we obtain a shorter circuit and thus a contradiction. (The proof for $q$ is analogous.)

\vspace{2mm}

For future reference, we note that there are in $C \cup D$ other circuits similar to (and of 
the same length as) $(x_0,x_1), (x_1,x_2), \dots, (x_n,x_0)$: in particular 
\begin{itemize}
\item [(1)]
the circuit $(y_0,y_1), (y_1,y_2), \dots, (y_n,y_0)$, 
\item [(2)]
any circuit $(y_0,y_1), \dots, (y_{i-1},y_i'), (y'_i,y_{i+1}), \dots, (y_n,y_0)$ where $y'_i$ is adjacent 
to $x_i$ but not to $x_{i-1}$,
\item [(3)]
and any circuit $(x_0,x_1), \dots, (x_{i-1}, x'_i), (x'_i,x_{i+1}), \dots, (x_n,x_0)$ where $x'_i$ is adjacent to $y_i$ 
but not to $y_{i-1}$.
\end{itemize}
In each of these cases it can be easily checked that all pairs are in $C \cup D$.

Since each of these circuits is also minimal, Observation \ref{cl1} applies to any of these alternate circuits.
For ease of the explanations, we will assume that the vertices $x_i$ are {\em white} in the bipartition of the graph, 
and the vertices $y_j$ are {\em black}.

Let $K$ denote the set of (black) vertices of $H$ adjacent to all $x_i, i=1, \dots, n$ and $K'$
the set of (white) vertices adjacent to all $y_i$. Each of the remaining vertices (of either colour) has at most one 
neighbour amongst $x_0, x_1, \dots, x_n$ and at most one neighbour amongst $y_0, y_1, \dots, y_n$.

\begin{observation} \label{cl2} 
The graph $H \setminus (K \cup K')$ has components $S_0, S_1, \dots, S_m$ where, for $i = 1, \dots, m$, the vertices
$x_i$ and $y_i$ are in $S_i$, and if $p \in K$ is adjacent to a (white) vertex of $S_i$, then it is adjacent to all white 
vertices of $S_i$, and if $q \in K'$ is adjacent to a (black) vertex of $S_i$, then it is adjacent to all black vertices of $S_i$. 

Moreover, if $x'_0, x'_1, \dots, x'_n$ are any white vertices with $x'_i \in S_i$, then 
$(x'_0,x'_1),$ $(x'_1,x'_2), \dots, (x'_n,x'_0)$ is also a circuit in $C \cup D$; and similarly, if $y'_0, y'_1, \dots, y'_n$ are any black 
vertices with $y'_i \in S_i$, then $(y'_0,y'_1), (y'_1,y'_2), \dots, (y'_n,y'_0)$ is also a circuit in $C \cup D$.
\end{observation}

First, we show that any path joining two different vertices $x_i, x_j$ must contain a vertex of $K \cup K'$. 
Let $x_i, b_1, a_2, \dots, a_t, b_t, x_j$ be such a path. If $x_rb_1 \in E(H)$ for some $r \ne i$, 
then by Observation~\ref{cl1}, $b_1$ is adjacent to all $x_0, x_1, \dots, x_n$, implying that $b_1 \in K$. Thus, suppose 
that $b_1$ is adjacent to only $x_i$. Now as in (2), we have the circuit
$(y_0,y_1), (y_1,y_2), (y_{i-1},b_1), (b_1,y_{i+1}), \dots,$ $(y_n,y_0)$ in $C \cup D$. (We say we ``replaced'' $y_i$ by $b_1$.)
Note that $(y_{i-1},b_1)$ and $(y_{i-1},y_i)$ are in the same strong component 
of $H^+$ and similarly for $(y_i,y_{i+1})$ and $(b_1,y_{i+1})$. 
As before, if $a_2y_r \in E(H)$, $r \ne i$, then $a_2$ is adjacent to all $y_0, y_1, \dots, y_n$, 
and hence, $a_2 \in K'$. Thus assume $a_2$ is adjacent to only $b_1$. 
As in (3), we can replace $x_i$ by $a_2$, and obtain the circuit 
$(x_0,x_1), \dots, (x_{i-1},a_2), (a_2,x_{i+1}), \dots, (x_n,x_0)$. 
By continuing this way, we eventually obtain the circuit $(x_0,x_1), \dots, (x_{i-1}, a_t), (a_t,x_{i+1}), \dots, (x_n,x_0)$ in $C \cup D$. 
Noting that $a_tb_t$ and $b_tx_j$ are in $E(H)$, we conclude that $b_t$ is adjacent to all $x_0, x_1, \dots, x_n$, implying that $b_t \in K$. 

This means that $H \setminus (K \cup K')$ has components $S_i$ with $x_i, y_i \in S_i$ for $i=1, 2, \dots, n$, as well as possibly 
other components $S_j, j > n$. The observations above now imply that each $x_i$ can be replaced in the circuit by 
any of $y_i$'s neighbours $x'_i \in S_i$, and by repeating the argument, by any $x'_i$ in the component $S_i$. Thus any $x'_i \in S_i$ 
lies in a suitable circuit and $p \in K$ is adjacent to each black vertex of $S_i, i \leq n$. 

We note that $K \cup K'$ induces a biclique. Otherwise, suppose $a \in K$ and $b \in K'$ where $ab \not\in E(H)$. 
Now $(x_0,x_1), (a,y_1), (x_1,b), (y_1,y_0), (x_1,x_0)$ is a directed path in $H^+$ and 
$(x_1,x_0), (a,y_0), (x_0,b), (y_0,y_1), (x_0,x_1)$ is also a directed path in $H^+$, implying that
$x_0, x_1$ is an invertible pair, a contradiction. 

\begin{observation}\label{cl3} 
Let $C_i$ be the component of $H^+$ containing $(x_i,x_{i+1})$, and let  $W$ be any directed path in $H^+$, 
starting at $(u,v) \in C_i$. Then for every $(p,q) \in W$, either $(p,q) \in C_i$ with $p \in S_i$ and $q \in S_{i+1}$,
or $(p,q)$ is the last vertex of $W$, and $(p,q)$ is a sink pair with $p \in K \cup K'$ and $q \in S_{i+1}$. 
\end{observation}

Let $(p,q)$ be the second vertex of $W$, following $(u,v)$. Since $uq$ is not an edge, $q \not\in K \cup K'$ and $u \not\in K \cup K'$.
Since $u \in S_i, v \in S_{i+1}$, we have $q \in S_{i+1}$ and $p \in S_i$ or $p \in K \cup K'$. In the former case, $(p,q)$ is in $C_i$;
in the latter case, when $p \in K \cup K'$, we have $N(q) \subseteq N(p)$, implying that $(p,q)$ is a sink pair.

\vspace{2mm}
\noindent From Observations~\ref{cl2} and~\ref{cl3} we conclude that all pairs $(x_i,x_{i+1})$ lie in different strong components of $H^+$,
and in particular that 
{\em in the circuit $(x_0,x_1), (x_1,x_2),$ $\dots,$ $(x_n,x_0)$ only the pair $(x_n,x_0)$ lies in $C$}, 
i.e., that all the other pairs $(x_i,x_{i+1}),$ $i < n,$ belong to $D$.

\vspace{2mm}

By a similar logic, we can deduce that the strong component $C'$ containing $(x_0,x_n)$ is also ripe. 
Indeed, consider the strong component $C'$ (coupled with $C$), containing $(x_0,x_n)$, and a pair 
$(p,q) \not\in C'$ dominated by some $(u,v) \in C'$. This means that $up, vq$ are edges, and $uq$ 
is not an edge, of $H$. Note that $(v,u) \in C$ belongs to the component $C_n$ from Observation \ref{cl3}, thus 
$v \in S_n, u \in S_0$. It follows that $p$ is in $S_0$ or in $K \cup K'$ and $q \in S_n$ (because the absence
of the edge $uq$ means it is not in $K \cup K'$). In the former case we would have $(p,q)$ is in $C'$, contrary
to assumption. In the latter case, $p \in K \cup K'$ and $q \in S_n$. Now $(p,q)$ is also dominated by any pair 
$(w,v) \in C_{n-1}$, with $w$ in $S_{n-1}$. Since all pairs of $C_{n-1}$ are in $D$, we have $(p,q) \in D$ and 
so $C'$ is ripe.

\vspace{2mm}
In conclusion if adding to $D$ the strong component $C$ containing $(x_n,x_0)$ created a circuit 
$(x_0,x_1), (x_1,x_2), \dots, (x_n,x_0)$, 
then adding its coupled component $C'$ containing $(x_0,x_n)$ cannot create a circuit. Indeed, if such circuit 
$(x_n,z_1)$, $(z_1,z_2), \dots, (z_m,x_0)$, $(x_0,x_n)$ existed, then the circuit
$$(x_0,x_1),(x_1,x_2), \dots, (x_{n-1},x_n), (x_n,z_1), (z_1,z_2), \dots, (z_m,x_0)$$ 
was already present in $D$, contrary to our assumption.

It remains to consider the following case.

\noindent {\bf Case 3.} Assume some pair $(x_i,x_{i+1})$ in the circuit lies in a non-trivial strong
component and the next pair $(x_{i+1},x_{i+2})$ lies in a trivial strong component. By Observation \ref{last}
$(x_{i+1},x_{i+2})$ is a sink pair, and there are two independent edges $px_i, qx_{i+1}$. Since there are no
isolated vertices, we have an edge $rx_{i+2}$, and by the discussion following Observation \ref{last} again, 
$px_{i+2}$ and $qx_{i+2}$ are non-edges giving $r \ne p, r \ne q$. 
Moreover, $rx_{i+1}$ is an edge (else $rx_{i+2}$ and $qx_{i+1}$ would be independent edges putting 
$(x_{i+1},x_{i+2})$ into a non-trivial strong component). Finally, we observe that $rx_i$ must be an edge, 
otherwise $(p,r)$ is dominated by $(x_i,x_{i+1})$ and dominates $(x_i,x_{i+2})$ which would 
mean $(x_i,x_{i+2})$ is in $D$, contradicting the minimality of our circuit. Since this is true for
any $r \in N(x_{i+2})$, this means that  $(x_i,x_{i+2})$ is also a sink pair and hence lies
in a trivial ripe strong component. It cannot be in $D$ (by the circuit minimality), so we 
can add it to $D$ instead of the original strong component $C$. This cannot create a 
circuit, as there would have been a circuit in $D$ already. (The proof is similar to the proof in Case 1.)

Thus in each case we have identified a ripe strong component that can be added to $D$
maintaining the validity of our four conditions (i-iv).
\end{proof}

From the proof of Theorem \ref{3} we derive the following corollary, that will be used in the next section.

\begin{corollary}\label{ext}
Suppose $D$ is a set of pairs of vertices of a bipartite graph $H$, such that
\begin{enumerate}
\item
if $(x,y) \in D$  and $(x,y)$ dominates $(x',y')$ in $H^+$, then  $(x',y') \in D$, and
\item
$D$ has no circuit.
\end{enumerate}
Then there exists a bipartite min ordering $<$ of $H$ such that $x < y$ for each $(x,y) \in D$ 
if and only if $H$ has no invertible pair.
\end{corollary}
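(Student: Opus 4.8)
The plan is to reduce the statement to the iterative construction already developed in the proof of Theorem~\ref{3}, feeding in the prescribed set $D$ as the starting configuration instead of an arbitrary one. One direction is immediate: a min ordering $<$ with $x < y$ for every $(x,y) \in D$ is in particular a min ordering of $H$, so the implication $1 \Rightarrow 2$ of Theorem~\ref{3} shows $H$ has no invertible pair. Hence the entire content lies in the converse, where I assume $H$ has no invertible pair and must exhibit a min ordering extending $D$.

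For the converse I would set $D' := \{(y,x) : (x,y) \in D\}$ and argue that the pair $D, D'$ is a legitimate initial configuration for the procedure in the proof of Theorem~\ref{3}, i.e.\ that it satisfies the four invariants (i)--(iv) maintained there. Invariant (i) (no circuit in $D$) is exactly hypothesis~2, invariant (iv) (no arc of $H^+$ leaving $D$) is exactly hypothesis~1, and invariant (iii) ($D'$ consists of the reversed pairs of $D$) holds by the definition of $D'$. The only invariant needing a short argument is (ii), that every strong component of $H^+$ lies entirely in $D$, entirely in $D'$, or outside both: since any two pairs of a strong component are mutually reachable along dominance arcs, hypothesis~1 forces an entire component into $D$ as soon as one of its pairs is in $D$; thus $D$ is a union of strong components, and $D'$ is the union of their coupled components.

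The genuinely delicate point is checking $D \cap D' = \emptyset$, which is what keeps the configuration consistent. If some $(x,y)$ and its reverse $(y,x)$ both lay in $D$, they would form the length-two circuit $(x,y),(y,x)$ in $D$, contradicting hypothesis~2. (This also meshes with the no-invertible-pair assumption, since a pair and its reverse occurring in the same strong component would precisely constitute an invertible pair.) With disjointness established, $D, D'$ satisfy all of (i)--(iv), and I may launch the extension process.

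From here I would invoke the core of the proof of Theorem~\ref{3} essentially verbatim: as long as some pair with distinct entries remains unassigned, that proof produces a ripe strong component that can be moved into $D$ (and its coupled component into $D'$) without creating a circuit, and its case analysis --- trivial versus non-trivial components, independent edges, the biclique $K \cup K'$, and Observations~\ref{last}, \ref{cl1}, \ref{cl2}, \ref{cl3} --- never uses that $D$ began empty. Iterating until every pair $(x,y)$ with $x \neq y$ is placed yields a final set $D$ containing the original one and satisfying conditions 3(a)--3(c) of Theorem~\ref{3}, so defining $x < y$ iff $(x,y) \in D$ gives the desired min ordering extending $D$. The main obstacle is thus conceptual rather than computational: one must confirm that the construction of Theorem~\ref{3} is genuinely relativized to an arbitrary admissible initial configuration, so that substituting the prescribed $D$ is legitimate; once the invariants are verified for the starting configuration, no new combinatorial work is required.
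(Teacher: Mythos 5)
Your proposal is correct and follows essentially the same route as the paper: the proof of Theorem~\ref{3} is deliberately written so that the iterative process may begin from \emph{any} sets $D, D'$ satisfying invariants (i)--(iv), and the corollary is obtained exactly by checking that the prescribed $D$ together with its reversed pairs $D'$ forms such an admissible initial configuration (your verification of (ii) via closure of strong components under domination, and of $D \cap D' = \emptyset$ via the length-two circuit, is precisely the intended argument). No substantive difference from the paper's derivation.
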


\section{Obstructions to min orderings of weakly balanced bipartite signed graphs}\label{Bip}

Suppose $\widehat{H}$ is a weakly balanced signed graph, represented by a signed graph
without purely red edges. The underlying graph of $\widehat{H}$ is denoted by $H$. Define 
$D_0$ to consist of all pairs $(x,y)$ in $H^+$ such that for some vertex $z$ there is a bicoloured edge 
$zx$ and a blue edge $zy$. Let $D$ be the reachability closure of $D_0$, i.e., the set of all 
vertices reachable from $D_0$ by directed paths in $H^+$. It is easy to see that a min ordering 
of $H$ is a special min ordering of $\widehat{H}$ if and only if it extends $D$ (in the sense 
that each pair $(x,y) \in D$ has $x < y$). Note that in bipartite graphs, for any $(x,y) \in D$
the vertices $x$ and $y$ are on the same side of any bipartition.

\begin{theorem}\label{flowers}
If $\widehat{H}$ has no chain, then the set $D$ can be extended to a special min ordering.
\end{theorem}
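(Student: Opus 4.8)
The plan is to deduce the theorem from Corollary~\ref{ext}. By construction $D$ is the reachability closure of $D_0$ in the pair digraph $H^+$, so it is automatically closed under domination: if $(x,y)\in D$ and $(x,y)$ dominates $(x',y')$, then $(x',y')$ is reachable from $D_0$ and hence lies in $D$. This is exactly hypothesis (1) of Corollary~\ref{ext}. Moreover, any min ordering $<$ of $H$ that extends $D$ is automatically a \emph{special} min ordering, since $D\supseteq D_0$ and, as noted just before the theorem, for a min ordering being special is equivalent to extending $D$. Consequently it suffices to verify the second hypothesis of Corollary~\ref{ext}, namely that \emph{$D$ has no circuit}: once this is known, the corollary produces a min ordering extending $D$ precisely when $H$ has no invertible pair, and that ordering is special. (If $H$ has an invertible pair there is no min ordering at all, so there is nothing to prove.) Thus the entire content of the theorem is the implication \emph{no chain $\Rightarrow$ $D$ has no circuit}, which I prove in contrapositive form: \emph{a circuit in $D$ yields a chain}.

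The mechanism converting membership in $D$ into walks is the following. If $(x,y)\in D$, unfolding a directed path in $H^+$ from a pair $(\alpha,\beta)\in D_0$ to $(x,y)$ and prepending the $D_0$-witness $z$ (with $z\alpha$ bicoloured and $z\beta$ blue) produces two equal-length walks starting at the common vertex $z$: a ``lower'' walk ending at $x$ whose first edge $z\alpha$ is bicoloured, and an ``upper'' walk ending at $y$ whose first edge $z\beta$ is blue, with the staircase non-edge conditions of $H^+$ holding along the way. This is exactly ``one half'' of a chain, carrying the correct colours at the $z$-end. The key observation is that skew symmetry lets me build the mirror half: if \emph{both} $(p,q)\in D$ and $(q,p)\in D$, then concatenating the half-chain for $(p,q)$ (read from its witness $z$ down to $(p,q)$) with the \emph{reverse} of the half-chain for $(q,p)$ (read from $(p,q)$ up to its witness $z'$, reversing each domination step by skew symmetry) yields two equal-length walks from $z$ to $z'$; one checks that $z$ sees a bicoloured and a blue edge while $z'$ sees the complementary pair, so the result is a chain. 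Hence the configuration ``$(p,q)$ and $(q,p)$ both in $D$'' already forces a chain.

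It therefore remains to extract such a configuration, or a chain directly, from a circuit. Here I would take a \emph{shortest} circuit $(x_0,x_1),(x_1,x_2),\dots,(x_n,x_0)$ in $D$ (all $x_i$ on the same side of $H$) and run a minimality analysis closely parallel to the proof of Theorem~\ref{3}. Fixing a shortest forcing path from $D_0$ to each circuit pair $(x_i,x_{i+1})$ and reading off the associated independent edges, I expect to reproduce the structural apparatus of that proof --- the analogue of Observation~\ref{cl1}, the sets $K,K'$ inducing a biclique, and the components $S_i$ --- now while additionally recording, for every edge used, whether it is bicoloured or merely blue. Tracking colours is essential, because the two staircase options in the definition of a chain correspond precisely to ordinary domination and to ``bicoloured'' domination, so the reassembled double walk remains a legal chain even where a step is bicoloured rather than a non-edge in the usual sense.

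The main obstacle is exactly this simultaneous bookkeeping of colours at the two ends. The forcing paths guarantee the bicoloured/blue split only at their $D_0$-origins, so the difficulty is to route the two walks around the circuit, splicing half-chains at the shared vertices $x_i$ and reversing others by skew symmetry, so that \emph{both} endpoints of the final double walk exhibit the required (and opposite) bicoloured/blue pattern while the two walks retain equal length. Minimality of the circuit is what prevents the various splices from short-circuiting: any shortcut edge that would spoil a staircase condition or let the two walks meet prematurely would, exactly as in the proof of Theorem~\ref{3}, produce a shorter circuit in $D$, a contradiction. Carrying this through yields either a pair with both orientations in $D$ (hence a chain by the construction above) or a chain assembled directly, completing the contrapositive.
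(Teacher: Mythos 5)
Your reduction is set up exactly as in the paper: $D$ is domination-closed by construction, extending $D$ is equivalent to being special, so via Corollary~\ref{ext} the whole theorem collapses to the implication ``a circuit in $D$ yields a chain.'' Your unfolding of a pair of $D$ into two equal-length walks rooted at a $D_0$-witness is also precisely the paper's notion of a \emph{petal}, and your splicing construction for the case where both $(p,q)$ and $(q,p)$ lie in $D$ is sound --- it is the paper's observation that a flower with two petals is literally a chain (the base case $n=2$ of Lemma~\ref{lem:flowerchain}).

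However, there is a genuine gap where the actual work lies. A circuit $(x_0,x_1),(x_1,x_2),\dots,(x_n,x_0)$ in $D$ with $n\geq 2$ corresponds to a flower with at least three petals, and nothing in your argument converts this into either a chain or a pair with both orientations in $D$; note that $D$ is only reachability-closed, not transitively closed, so a long circuit gives you no such pair for free. At this point you appeal to ``a minimality analysis closely parallel to the proof of Theorem~\ref{3}'' with colours tracked along the way, and you yourself identify the simultaneous colour bookkeeping at the two ends as the main obstacle --- but you never resolve it; the final sentence simply asserts the conclusion. The paper's solution is not a Theorem~\ref{3}-style analysis at all (that apparatus of ripe components, $K$, $K'$ and the $S_i$ was built under the no-invertible-pair hypothesis for a different purpose); instead it proves a self-contained Lemma~\ref{lem:flowerchain} by induction on the number of petals, using two petal-surgery operations (Observation~\ref{obs:extend} to lengthen a petal, Observation~\ref{obs:modify} to swap its lower terminal) to shrink a shortest petal to length one --- with separate treatment of the parity of $n$ --- and then a case analysis on whether the critical edge $as$ is missing, unicoloured, or bicoloured, each branch either producing a two-petal flower (a chain) or a flower with fewer petals. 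This induction, which is the crux of the theorem, has no counterpart in your proposal, so the proof is incomplete.
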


Clearly, the set $D$ by its definition satisfies condition 1 of Corollary~\ref{ext}. It remains to 
verify that it also satisfies condition 2.

Define a \emph{petal} in $\widehat{H}$ to be two walks $x, l_1, l_2, \dots, l_k$ and $x, u_1, u_2, \dots, u_k$ where $xl_1$ is bicoloured, $xu_1$ is unicoloured, and $l_i u_{i+1}$ is not an edge for $i=1, 2, \dots, k-1$.  We denote the petal by $x, (l_1, u_1), (l_2, u_2), \dots, (l_k, u_k)$.  We say the petal has length $k$. The petal has \emph{terminals}, or \emph{terminal pair},  $(l_k, u_k)$.  We call $l_k$ the \emph{lower terminal}, and $u_k$ the \emph{upper terminal}.  In a special min ordering $l_i < u_i$ for $i=1, 2, \dots, k$.

A \emph{flower} is a collection of petals $P_1, P_2, \dots, P_n$ with the following structure.  If $(l_k, u_k)$ is the terminal pair of $P_i$ and $(l'_{k'}, u'_{k'})$ is the terminal pair of $P_{i+1}$, then $u_k = l'_{k'}$.  (The petal indices are treated modulo $n$ so that the lower terminal of $P_1$ equals the upper terminal of $P_n$.)  Suppose $P_1, P_2, \dots, P_n$ is a flower with terminal pairs $(l^{(1)}, u^{(1)}), (l^{(2)}, u^{(2)}), \dots, (l^{(n)}, u^{(n)})$.  Then the following circular implication shows $\widehat{H}$ does not admit a special min ordering:
$$
l^{(1)} < u^{(1)} = l^{(2)} < \cdots < l^{(n)} < u^{(n)} = l^{(1)}.
$$

It is clear that a flower yields a circuit in the set $D$ (of $H^+$) defined at the start of this section, and conversely, each such circuit arises from a flower. Thus, it remains to prove that if $\widehat{H}$ contains a flower, then it also contains a chain.

We present two observations that allow us to extend the length of a petal, or modify the terminal pair.
\begin{observation}\label{obs:extend}
Suppose $x, (l_1, u_1), \dots, (l_k, u_k)$ is a petal.  Let $v$ be a vertex such that $u_k v$ is an edge and $l_k v$ is not an edge.  Then $x, (l_1, u_1), \dots, (l_k, u_k), (w, v)$ is a petal of length $k+1$ for any neighbour $w$ of $l_k$.
\end{observation}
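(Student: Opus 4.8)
The plan is to prove the observation by directly verifying that the extended pair of walks satisfies every clause in the definition of a petal. Recall that a petal of length $k+1$ consists of two walks $x, l_1, \dots, l_{k+1}$ and $x, u_1, \dots, u_{k+1}$ from the same vertex $x$, with $xl_1$ bicoloured, $xu_1$ unicoloured, and $l_i u_{i+1}$ a non-edge for each $i = 1, 2, \dots, k$. So I would set $l_{k+1} := w$ and $u_{k+1} := v$ and check these three groups of conditions in turn.

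First I would confirm that the two sequences are genuine walks of equal length. The lower walk gains the step $l_k l_{k+1} = l_k w$, which is an edge precisely because $w$ is chosen to be a neighbour of $l_k$; the upper walk gains the step $u_k u_{k+1} = u_k v$, which is an edge by hypothesis. All earlier steps are unchanged from the given petal, and both walks now have length $k+1$.

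Next, the colour conditions on the first edges, namely that $xl_1$ is bicoloured and $xu_1$ is unicoloured, involve only the initial segment of each walk and are therefore inherited unchanged from the original petal. For the non-edge conditions $l_i u_{i+1} \notin E(H)$, the cases $i = 1, \dots, k-1$ are exactly the non-edge conditions of the original length-$k$ petal and hence already hold. The single new condition, at $i = k$, reads $l_k u_{k+1} = l_k v \notin E(H)$, which is precisely the hypothesis that $l_k v$ is not an edge. This exhausts all the required conditions.

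There is essentially no obstacle here; the one point worth flagging is why $w$ may be taken to be an arbitrary neighbour of $l_k$. The new lower terminal $l_{k+1} = w$ could appear on the left-hand side of a non-edge condition only for the index $i = k+1$, but the conditions range only over $i \leq k$; thus $w$ is never required to be non-adjacent to any vertex, and its sole role is to supply the final edge $l_k w$ of the lower walk. Hence any neighbour of $l_k$ serves, completing the verification.
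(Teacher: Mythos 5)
Your verification is correct: setting $l_{k+1} = w$ and $u_{k+1} = v$ and checking each clause of the petal definition (walk property, the colours of $xl_1$ and $xu_1$, and the non-edge conditions $l_i u_{i+1}$ for $i = 1, \dots, k$) is exactly the argument, and your remark that $w$ never occurs in any non-edge condition is the right explanation for why an arbitrary neighbour of $l_k$ suffices. The paper states this observation without proof precisely because it is this immediate check, so your write-up matches the intended (omitted) argument.
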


\begin{observation}\label{obs:modify}
Suppose $x, (l_1, u_1), \dots, (l_k, u_k)$ is a petal.  Then $x$,$(l_1, u_1)$, $\dots$, $(w, u_k)$ is a petal of length $k$ for any neighbour $w$ of $l_{k-1}$ (where $l_0 = x$ in the case $k=1$ in which case $xw$ must be bicoloured).
\end{observation}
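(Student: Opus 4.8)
The plan is to verify directly that replacing the lower terminal $l_k$ of the petal by an arbitrary neighbour $w$ of its predecessor $l_{k-1}$ leaves every defining condition of a petal intact. Recall that $x, (l_1,u_1), \dots, (l_k,u_k)$ is a petal exactly when $x, l_1, \dots, l_k$ and $x, u_1, \dots, u_k$ are walks, the first lower edge $xl_1$ is bicoloured, the first upper edge $xu_1$ is unicoloured, and $l_i u_{i+1}$ is a non-edge for every $i$ with $1 \leq i \leq k-1$. First I would note that the proposed modification changes only the vertex $l_k$, leaving the whole upper walk $x, u_1, \dots, u_k$ and the lower vertices $l_1, \dots, l_{k-1}$ untouched.

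The crux is that $l_k$ is almost unconstrained. In the walk conditions, the only requirement on the last lower vertex is adjacency to its predecessor $l_{k-1}$, which holds by hypothesis since $w$ is a neighbour of $l_{k-1}$. The colour conditions on $xl_1$ and $xu_1$ do not mention $l_k$. Most importantly, the non-edge conditions $l_i u_{i+1}$ run only over $1 \leq i \leq k-1$, so they involve $l_1, \dots, l_{k-1}$ and never the vertex $l_k$ being replaced. Hence all three groups of conditions survive verbatim with $w$ in place of $l_k$, and $x, (l_1,u_1), \dots, (l_{k-1},u_{k-1}), (w,u_k)$ is again a petal of length $k$.

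Finally I would dispose of the degenerate case $k=1$, where $l_0 = x$ and the lower walk is the single edge $xl_1$. Here there are no non-edge conditions to preserve (the index range $1 \leq i \leq k-1$ is empty), and the only surviving constraint is that the first lower edge be bicoloured; replacing $l_1$ by a neighbour $w$ of $x$ therefore yields a petal precisely when $xw$ is bicoloured, which is the stated proviso. I do not anticipate any real obstacle: the entire content of the observation is the bookkeeping remark that the lower terminal enters no non-edge constraint, so the only care needed is to confirm that the non-edge conditions stop at index $k-1$ and thus never reference $l_k$.
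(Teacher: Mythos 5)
Your proof is correct and takes the same (essentially only) route the paper intends: the paper states this as an observation without proof precisely because, as you verify, the lower terminal $l_k$ appears in no defining condition of a petal except adjacency to $l_{k-1}$, so the replacement by any neighbour $w$ of $l_{k-1}$ is immediate, with the $k=1$ proviso that $xw$ be bicoloured. Nothing is missing.
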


\begin{figure}[b]
\begin{center}
\begin{tikzpicture}[scale=0.5]
  \node[blackvertex,label={180:$x$}] (x) at (0,1) {};
  \node[blackvertex,label={270:$l_1$}] (l1) at (2,0) {};
  \node[blackvertex,label={90:$u_1$}] (u1) at (2,2) {};
  \node[blackvertex,label={270:$l_2$}] (l2) at (4,0) {};
  \node[blackvertex,label={90:$u_2$}] (u2) at (4,2) {};
  \node[blackvertex,label={270:$l_3$}] (l3) at (6,0) {};
  \node[blackvertex,label={90:$u_3$}] (u3) at (6,2) {};
  \node at (8,2) {$\dots$};
  \node at (8,0) {$\dots$};
  \node[blackvertex,label={270:$l_{k-1}$}] (lk1) at (10,0) {};
  \node[blackvertex,label={90:$u_{k-1}$}] (uk1) at (10,2) {};
  \node[whitevertex,label={270:$l_k$}] (lk) at (12,0) {};
  \node[whitevertex,label={[label distance=1pt] 90:$u_k$}] (uk) at (12,2) {};

  \draw[thick,blue] (x) -- (u1)--(u2)--(u3) (uk1)--(uk);
  \draw[thick,red,dashed] (x) to[bend left=20] (l1); 
  \draw[thick,blue] (x) to[bend right=20] (l1); 
  \draw[thick,blue] (l1)--(l2)--(l3) (lk1)--(lk);

  \draw[thick,dotted] (l1)--(u2) (l2)--(u3) (lk1)--(uk);
\end{tikzpicture}
\end{center}
\caption{A petal of length $k$ with terminals $(l_k, u_k)$. Dotted edges are missing.}
\end{figure}
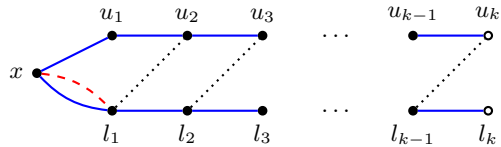

Each petal in $\widehat{H}$ enforces an order on the pairs $(l_i, u_i)$.  Our aim is to prove that if $(l_i, u_i)$ belongs to several petals, then all petals in $\widehat{H}$ enforce the same ordering, or we discover a chain in $\widehat{H}$.

We are now ready to prove the lemma needed.

\begin{lemma}\label{lem:flowerchain}
Suppose $P_1, P_2, \dots, P_n$ is a flower in $\widehat{H}$.  Then $\widehat{H}$ contains a chain.
\end{lemma}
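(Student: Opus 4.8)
The plan is to work in the pair digraph $H^+$ and show that a flower forces some unordered pair to be demanded in both orders, which by a short skew-symmetry argument already produces a chain. First I would record the dictionary between petals and walks in $H^+$. A petal $x,(l_1,u_1),\dots,(l_k,u_k)$ is exactly a directed walk $(l_1,u_1)\to(l_2,u_2)\to\cdots\to(l_k,u_k)$ in $H^+$ whose first pair lies in $D_0$: the non-edges $l_ju_{j+1}$ together with the edges $l_jl_{j+1}$ and $u_ju_{j+1}$ say that $(l_j,u_j)$ dominates $(l_{j+1},u_{j+1})$, while the bicoloured edge $xl_1$ and the blue edge $xu_1$ witness $(l_1,u_1)\in D_0$ with apex $x$. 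Hence each terminal pair lies in $D$, and a flower $P_1,\dots,P_n$ with $u^{(i)}=l^{(i+1)}$ gives the circuit $(l^{(1)},l^{(2)}),(l^{(2)},l^{(3)}),\dots,(l^{(n)},l^{(1)})$ in $D$, exactly as noted just before the observations.

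The reduction I would isolate is the clean fact that \emph{if both $(a,b)\in D$ and $(b,a)\in D$, then $\widehat{H}$ contains a chain}. Indeed, take a walk in $H^+$ from a pair $S_0\in D_0$ to $(b,a)$ and a walk from a pair $S_0'\in D_0$ to $(a,b)$; by skew symmetry the reverse of the second walk runs from $(b,a)$ to $\overline{S_0'}$. Concatenating gives a single walk from $S_0\in D_0$ to a pair $Q=\overline{S_0'}$ with $\bar Q=S_0'\in D_0$. Reading first coordinates as the $d$-walk, second coordinates as the $u$-walk, and taking as the common endpoints $u,v$ the two apices witnessing $S_0,S_0'\in D_0$, one checks that the bicoloured edges $ud_1$ and $u_{k-1}v$, the unicoloured edges $uu_1$ and $d_{k-1}v$, and the domination (non-edge) conditions in between are precisely those defining a chain. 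So it suffices to extract, from any flower, one pair that occurs in $D$ in both orders.

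I would obtain such a pair by induction on the number $n$ of petals. When $n=2$ the circuit is $(l^{(1)},l^{(2)}),(l^{(2)},l^{(1)})$, both orders are present, and the fact above finishes; the same shortcut applies in the degenerate case $l^{(i)}=l^{(i+2)}$, since then $(l^{(i)},l^{(i+1)})$ and $(l^{(i+1)},l^{(i)})$ both lie in $D$. For $n\ge 3$ with all these lower terminals distinct, the aim is to \emph{merge two consecutive petals}: replace $P_i,P_{i+1}$, which share $z=u^{(i)}=l^{(i+1)}$, by a single petal with terminal $(l^{(i)},u^{(i+1)})$. This yields a genuine flower on $n-1$ petals, since the gluing vertices $l^{(i)}$ and $u^{(i+1)}=l^{(i+2)}$ still match $P_{i-1}$ and $P_{i+2}$, and induction then applies. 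Concretely I would construct the merged petal by climbing $P_i$ to its terminal and then following the reflection of $P_{i+1}$ through $z$ --- down its lower walk to $x^{(i+1)}$ and back up its upper walk to $u^{(i+1)}$ --- lengthening the petal one vertex at a time with Observation~\ref{obs:extend} and re-anchoring the lower terminal with Observation~\ref{obs:modify}.

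The main obstacle is precisely this merge. Each application of Observation~\ref{obs:extend} is legitimate only while the vertex appended to the upper walk is non-adjacent to the current lower terminal, and keeping the lower walk viable throughout the reflection is what forces the use of Observation~\ref{obs:modify}. The crux is to prove that whenever one of these required non-edges is actually present as an edge, that single extra edge, combined with the bicoloured and unicoloured edges inherited from the starts of $P_i$ and $P_{i+1}$, already assembles into a chain. Thus at every step we either continue the merge or stop with a chain in hand; if all merges succeed we descend to $n=2$ and read off the chain there. I expect the delicate point to be the bookkeeping of which edges must be absent during the reflection, and the verification that a violating edge produces exactly the endpoint bicoloured/unicoloured pattern a chain requires.
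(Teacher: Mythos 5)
Your setup is sound, and part of it is genuinely nice: the petal--walk dictionary is exactly the correspondence the paper uses, and your skew-symmetry reduction --- if $(a,b)$ and $(b,a)$ both lie in $D$, concatenate a walk from a $D_0$-pair to $(b,a)$ with the reverse of a walk from a $D_0$-pair to $(a,b)$, and read off a chain whose endpoints are the two apices --- is correct (it is, in disguise, the paper's remark that a two-petal flower is precisely a chain). The gap is the inductive step, and it is not a small one: it is the entire content of the lemma. You propose to merge $P_i, P_{i+1}$ into one petal by extending $P_i$ along the reflection of $P_{i+1}$ (down its lower walk, through its apex, up its upper walk), and you claim that whenever one of the required non-edges is present as an edge, that edge ``already assembles into a chain.'' That dichotomy is false as stated. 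Already at the very first step, if $l^{(i)}$ is adjacent to the predecessor of $z$ on $P_{i+1}$'s lower walk, what you get is not a chain: by Observation~\ref{obs:modify} you can re-anchor $P_{i+1}$'s lower terminal to $l^{(i)}$ and discard $P_i$, obtaining a flower with $n-1$ petals --- usable for induction, but a different outcome than the one you claim, and when $P_{i+1}$ has length one the \emph{colour} of the offending edge decides which outcome occurs (in the paper's endgame, $as$ unicoloured gives a chain, $as$ bicoloured gives a smaller flower, and Observation~\ref{obs:modify} at the apex is only available in the bicoloured case). Deeper into the reflection it is worse: an edge from $l^{(i)}$, or from your oscillating vertex $w$, to an interior vertex $l'_{j-1}$ of $P_{i+1}$'s lower walk yields via Observation~\ref{obs:modify} only a petal terminating at $(l^{(i)}, u'_j)$ with the \emph{wrong} upper terminal, so you must keep extending it toward $u^{(i+1)}$, and the required non-edges cascade with no evident termination; neither a chain nor a smaller flower falls out, and you offer no mechanism that makes this recursion bottom out.

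This is exactly why the paper never merges two petals of arbitrary length. It first runs a \emph{global} length-reduction phase: choose $P_2$ of minimum length over all petals, then shrink the even-indexed petals while extending the odd-indexed ones. These operations are not local to two petals --- extending $P_1$ changes its lower terminal, which forces a change to $P_n$, and so on around the whole flower --- and this propagation is what creates the parity problem for odd $n$, handled by the special $(a,t)$ modification of $P_1$. Only after $P_2$ has been driven down to length one is the merge attempted, and at that point it needs just two non-edge conditions ($as$ and $et$), each of whose failures is resolved explicitly: $as$ unicoloured or $et$ an edge give a chain outright; $as$ bicoloured gives an $(n-1)$-petal flower; both absent lets $P_2$ be absorbed into $P_1$, again giving $n-1$ petals. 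Your proposal contains no analogue of the length-reduction phase, no awareness of the propagation around the flower or the parity issue it creates, and the step you yourself flag as ``the crux'' and ``the delicate point'' is precisely where the whole proof lives. As written, the argument does not go through.
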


\begin{proof}
We proceed by induction on $n$.  The statement is clearly true if $n=2$ as the flower is precisely a chain.

Thus assume $n \geq 3$.  Without loss of generality suppose the length of $P_2$ is minimal over all petals.  We begin by proving we may reduce the length of $P_2$ to one.  Thus, assume $P_2$ has length at least two.  Suppose the terminal pairs and their predecessors are labelled as in Figure~\ref{fig:reduceP2} on the left.  

We first observe that if $as$ is an edge, then by Observation~\ref{obs:modify} we can change the terminal pair of $P_2$ to be $(s,e)$.  Now $P_2, P_3, \dots, P_n$ is a flower with $n-1$ petals and by induction $\widehat{H}$ has a chain.  Hence, assume $as$ is not an edge.  By Observation~\ref{obs:extend} we can extend $P_1$ to $x, \dots, (t,c), (s,b), (r,a)$.   Using similar reasoning, we see that $eu$ is not an edge and $P_3$ can be extended so its terminal pair is $(d,u)$.  Thus we remove the terminal pair from $P_2$ so that its terminal pair is $(a,d)$.  At this point, $P_1, P_2, P_3$ are the first three petals of a flower where the length of $P_2$ has been reduced by one from its initial length.

If $n$ is even, then we similarly reduce the length of $P_4, P_6, \dots, P_n$ (recall $P_2$ has minimal length over all petals) and extend the length of all petals with odd indices by one.  We obtain a flower where all petals with even subscripts have their length reduced by one from their initial length.  Continuing in this manner we can reduce $P_2$ to length one.

On the other hand, if $n$ is odd, then we modify the extension of $P_1$ so its terminal pair is $(a,t)$.  Namely at the first step we extend $P_1$ to $x, \dots, (t,c), (s,b), (t,a)$.  As $n$ is odd, the petal $P_n$ will be extended.  The extension of $P_n$ will extend its upper terminal from $s$ to $t$.  Thus, the upper terminal of $P_n$ equals the lower terminal of $P_1$ and the result is again a flower.  As in the previous case, we can reduce $P_2$ to have length one.  (As an aid to the reader, we note that when $n=3$, $s=v$ and $t=u$. The extensions of $P_1$ and $P_3$ are such that $P_1$ will terminate in $(a,t)$ and $P_3$ will terminate in $(d,t)$.)

Thus, we may assume we have a flower where $P_2$ has length one as shown in Figure~\ref{fig:reduceP2} on the right.  If $as$ is a unicoloured edge, then we modify the terminal pair of $P_2$ to be $(b,s)$.  Hence, $P_1, P_2$ is a flower with two petals and thus a chain.  If $as$ is a bicoloured edge, then we modify $P_2$ to have terminal pair $(s, e)$.  Now $P_2, P_3, \dots, P_n$ is a flower with $n-1$ petals, and by induction $\widehat{H}$ contains a chain.  Therefore, $as$ is not an edge.

If $et$ is an edge, then we can modify $P_1$ to have terminal pair $(e,b)$ by Observation~\ref{obs:modify}.  Thus, $P_1, P_2$ is a flower with two petals, i.e., a chain.  Hence, $et$ is not an edge, and we can now extend $P_1$ by Observation~\ref{obs:extend} to be $x, \dots, (t,c)$, $(s,b)$, $(t,a)$, $(s, e)$ incorporating $P_2$ into $P_1$. Now we have a flower $P_1, P_3, \dots, P_n$ and by induction $\widehat{H}$ has a chain.
\end{proof}

%%%
%%% Inclusion of 2 figures into 1 (Aug 8, 2022)
%%% Original second figure is commented out
%%%
\begin{figure}[t]
\begin{center}
\begin{tikzpicture}[scale=0.5]
  \node[blackvertex,label={180:$x$}] (x) at (0,1) {};
  \node[blackvertex,label={[label distance = 3pt] 135:$P_1$}] at (0,1) {};
  \node[blackvertex] (l1) at (1,0) {};
  \node[blackvertex] (u1) at (1,2) {};
  \node at (1.75,2) {$\dots$};
  \node at (1.75,0) {$\dots$};
  \node[blackvertex,label={270:$t$}] (lk1) at (2.5,0) {};
  \node[blackvertex,label={90:$c$}] (uk1) at (2.5,2) {};
  \node[whitevertex,label={225:$s$}] (lk) at (4,0) {};
  \node[whitevertex,label={135:$b$}] (uk) at (4,2) {};

  \draw[thick,blue] (x) -- (u1) (uk1)--(uk);
  \draw[thick,red,dashed] (x) to[bend left=20] (l1); 
  \draw[thick,blue] (x) to[bend right=20] (l1); 
  \draw[thick,blue] (lk1)--(lk);

  \draw[thick,dotted] (lk1)--(uk);

% P_2
  \node[blackvertex,label={180:$a$}] (a) at (4,3.5) {};
  \node[blackvertex,label={0:$d$}] (d) at (6,3.5) {};
  \node[whitevertex,label={45:$e$}] (upk) at (6,2) {};
  \node at (4,4.25) {$\vdots$};
  \node at (6,4.25) {$\vdots$};
  \node[blackvertex] (ap) at (4,5) {};
  \node[blackvertex] (dp) at (6,5) {};
  \node[blackvertex,label={90:$P_2$}] (xp) at (5,6) {};
  
  \draw[thick,blue] (xp)--(dp) (a)--(uk) (d)--(upk);
  \draw[thick,red,dashed] (xp) to[bend left=20] (ap); 
  \draw[thick,blue] (xp) to[bend right=20] (ap); 

  \draw[thick,dotted] (a)--(upk);
% P_3
  \node[blackvertex,label={90:$f$}] (f) at (7.5,2) {};
  \node[blackvertex,label={315:$P_3$}] (xpp) at (9,1) {};
  \node[blackvertex,label={270:$w$}] (w) at (7.5,0) {};
  \node[whitevertex,label={335:$v$}] (v) at (6,0) {};
  \node[blackvertex,label={0:$u$}] (u) at (6,-1.5) {};
  
  \node at (8.25,1.7) {$\ddots$};
  \node at (8.25,0.7) {$\iddots$};
    
  \draw[thick,blue] (f)--(upk) (w)--(v)--(u);
  \draw[thick,dotted] (f)--(v);
  
% P_n
  \node[blackvertex,label={180:$r$}] (r) at (4,-1.5) {};
  \draw[thick,blue] (lk)--(r);  
  \node at (4,-2) {$P_n$};
  \node at (6,-2) {$P_4$};
  
  \node at (5,-1.5) {$\dots$};
  
  \draw[thick,dotted] (a) to[bend left=20] (lk);
  \draw[thick,dotted] (u) to[bend left=20] (upk);

%%  
%% Second example
%%
\begin{scope}[xshift=12.5cm, yshift=0cm]
  \node[blackvertex,label={180:$x$}] (x) at (0,1) {};
  \node[blackvertex,label={[label distance = 3pt] 135:$P_1$}] at (0,1) {};  
  \node[blackvertex] (l1) at (1,0) {};
  \node[blackvertex] (u1) at (1,2) {};
  \node at (1.75,2) {$\dots$};
  \node at (1.75,0) {$\dots$};
  \node[blackvertex,label={270:$t$}] (lk1) at (2.5,0) {};
  \node[blackvertex,label={90:$c$}] (uk1) at (2.5,2) {};
  \node[whitevertex,label={225:$s$}] (lk) at (4,0) {};
  \node[whitevertex,label={135:$b$}] (uk) at (4,2) {};

  \draw[thick,blue] (x) -- (u1) (uk1)--(uk);
  \draw[thick,red,dashed] (x) to[bend left=20] (l1); 
  \draw[thick,blue] (x) to[bend right=20] (l1); 
  \draw[thick,blue] (lk1)--(lk);

  \draw[thick,dotted] (lk1)--(uk);

% P_2
  \node[blackvertex,label={45:$a$}] (a) at (5,3.5) {};
  \node[whitevertex,label={45:$e$}] (upk) at (6,2) {};
  \node at (4.5,4) {$P_2$};
  
  \draw[thick,blue] (a)--(upk);
  \draw[thick,red,dashed] (a) to[bend left=20] (uk); 
  \draw[thick,blue] (a) to[bend right=20] (uk); 

% P_3
  \node[blackvertex,label={90:$f$}] (f) at (7.5,2) {};
  \node[blackvertex,label={335:$P_3$}] (xpp) at (9,1) {};
  \node[blackvertex,label={270:$w$}] (w) at (7.5,0) {};
  \node[whitevertex,label={335:$v$}] (v) at (6,0) {};
  \node[blackvertex,label={0:$u$}] (u) at (6,-1.5) {};
  
  \node at (8.25,1.7) {$\ddots$};
  \node at (8.25,0.7) {$\iddots$};
    
  \draw[thick,blue] (f)--(upk) (w)--(v)--(u);
  \draw[thick,dotted] (f)--(v);
  
% P_n
  \node[blackvertex,label={180:$r$}] (r) at (4,-1.5) {};
  \draw[thick,blue] (lk)--(r);  
  \node at (4,-2) {$P_n$};
  \node at (6,-2) {$P_4$};
  
  \node at (5,-1.5) {$\dots$};
  
  \draw[thick,dotted] (a) to[bend left=10] (lk);
  \draw[thick,dotted] (lk1) to[bend left=10] (upk);  
\end{scope}
  
\end{tikzpicture}
\end{center}

\caption{The labellings used in Lemma~\ref{lem:flowerchain}.  On the left is the case when
$P_2$ has length greater than $1$ and on the right when $P_2$ has length $1$.
Dotted edges are missing.}\label{fig:reduceP2}
\end{figure}
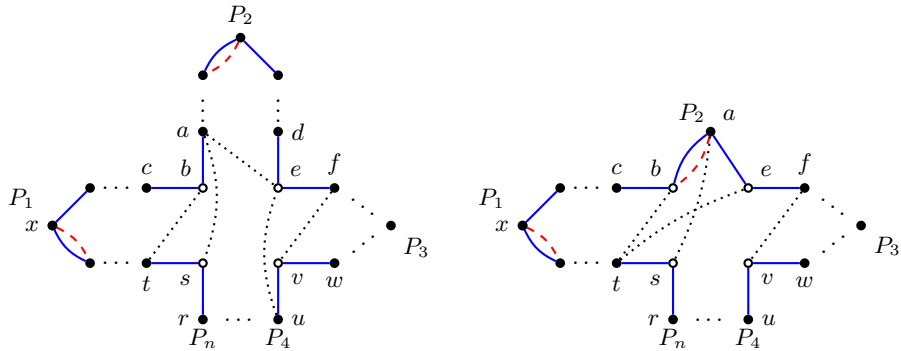

Thus if a weakly balanced bipartite signed graph has no invertible pair and no chain, it has no flowers
by Theorem \ref{flowers}, and hence by Corollary \ref{ext} it has a special min ordering.

Finally, we remark that the proofs are algorithmic, allowing us to construct the desired min ordering 
(if there is no invertible pair) or special min ordering (if there is no invertible pair and no chain).

We have proved our main theorem, which was conjectured by Kim and Siggers.

\begin{theorem}\label{main}
A weakly balanced bipartite signed graph $\widehat{H}$ has a special min ordering
if and only if it has no chain and no invertible pair. If $\widehat{H}$ has a special min ordering,
then the the list homomorphism problem for $\widehat{H}$ can be solved in polynomial time. 
Otherwise $\widehat{H}$ has a chain or an invertible pair and the list homomorphism problem 
for $\widehat{H}$ is NP-complete.
\end{theorem}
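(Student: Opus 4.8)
The plan is to split Theorem~\ref{main} into three parts --- the combinatorial characterization, the polynomial algorithm, and the NP-completeness --- and to observe that the characterization is essentially already assembled from the results proved above. For the forward direction, note that a special min ordering is in particular a min ordering of the underlying graph $H$, so by Theorem~\ref{3} there is no invertible pair; moreover a chain is precisely a flower with $n=2$ (the base case of Lemma~\ref{lem:flowerchain}), and in any special min ordering its terminal pairs force $l^{(1)} < u^{(1)} = l^{(2)} < u^{(2)} = l^{(1)}$, which is impossible, so there is no chain. For the converse, if $\widehat{H}$ has no chain then by the contrapositive of Lemma~\ref{lem:flowerchain} it has no flower, hence the set $D$ has no circuit; since $D$ satisfies condition~1 of Corollary~\ref{ext} by construction and $\widehat{H}$ has no invertible pair, Corollary~\ref{ext} produces a min ordering extending $D$, which is exactly a special min ordering.

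For the tractable side I would use the special min ordering to exhibit a polymorphism. Writing $<$ for the ordering, define a binary operation $\wedge$ on each side of the bipartition by taking the $<$-minimum. The min ordering property shows that $\wedge$ preserves adjacency of $H$; the extra special condition---at every vertex the bicoloured neighbours precede the unicoloured ones---shows that $\wedge$ also preserves the bicoloured adjacency relation, since if $a_1b_1, a_2b_2$ are bicoloured with $a_1 < a_2$ and $b_2 < b_1$, then the min ordering gives the edge $a_1b_2$, and $b_2$, preceding the bicoloured neighbour $b_1$ of $a_1$, is itself a bicoloured neighbour of $a_1$. Thus $\wedge$ is a conservative semilattice polymorphism of the relational structure encoding the signed list homomorphism problem for $\widehat{H}$. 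After the standard polynomial preprocessing that switches the input and detects the sign-product obstructions (see the discussion of~\cite{rezazasla} in the introduction), the instance becomes a conservative constraint problem on which the presence of $\wedge$ guarantees a polynomial algorithm; in fact one may assign each vertex the $<$-minimum surviving value in its list after establishing consistency, which is the direct combinatorial algorithm developed later in the paper.

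For NP-completeness I would treat the two obstructions separately. If $\widehat{H}$ contains an invertible pair, then its underlying bipartite graph $H$ contains an invertible pair and hence has no min ordering by Theorem~\ref{3}, so the ordinary (unsigned) list homomorphism problem for $H$ is NP-complete~\cite{feder1999list,esa}. I would reduce this problem to the signed problem for $\widehat{H}$ by the identity reduction that declares every edge of the input to be blue (unicoloured): because $\widehat{H}$ has no purely red edges, every unicoloured closed walk of such an input has positive sign product, and so does every unicoloured image, so the sign condition is vacuous and a signed homomorphism is exactly an underlying homomorphism respecting the lists. This yields NP-completeness whenever an invertible pair is present.

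The remaining, and main, difficulty is the chain case: when $\widehat{H}$ has a chain but no invertible pair, the underlying graph $H$ does have a min ordering, so the unsigned problem is polynomial and the hardness must come from the interaction of the two edge types. Here I would build a gadget directly from the chain---the two equal-length walks together with their bicoloured and unicoloured endpoints and the prescribed missing diagonal edges---and use it to propagate and then reverse the forced strict inequality between a bicoloured and a unicoloured neighbour, producing an inconsistency gadget from which a canonical NP-complete problem (such as not-all-equal satisfiability) reduces. Verifying that this gadget realizes exactly the intended relation, and in particular that it cannot be ``short-circuited'' inside $\widehat{H}$ (which is precisely where the defining non-edge conditions of a chain are needed), is the step I expect to require the most care, and it is the essential new content beyond the characterization and the algorithm.
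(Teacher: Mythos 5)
Your first paragraph, the combinatorial characterization, is correct and is exactly the paper's route: a special min ordering is in particular a min ordering, so Theorem~\ref{3} excludes invertible pairs; a chain is a flower with $n=2$, whose circular implication on terminal pairs kills specialness; and conversely, no chain implies no flower by the contrapositive of Lemma~\ref{lem:flowerchain}, hence no circuit in the reachability-closed set $D$, so Corollary~\ref{ext} (with the hypothesis of no invertible pair) yields a min ordering extending $D$, which is precisely a special min ordering. Your all-blue reduction for the invertible-pair hardness case is also sound, and matches how that case follows from the unsigned results cited in the paper.

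The genuine gap is on the tractable side. The structure on which you verify the semilattice polymorphism --- $H$ with its edge relation and its bicoloured-edge relation --- does not encode the signed list homomorphism problem. A homomorphism of signed graphs must additionally map every unicoloured closed walk of $\widehat{G}$ with an odd number of red edges to a closed walk of $\widehat{H}$ containing a bicoloured edge; this is a \emph{global} condition on closed walks, not an edge-by-edge constraint, which is exactly why the paper remarks that these problems ``are not directly CSP problems.'' Your proposed preprocessing cannot remove it: an input $\widehat{G}$ that is not weakly balanced cannot be switched to be all blue, and the machinery of~\cite{rezazasla} only certifies or refutes a given mapping. The encoding that does turn the problem into a CSP (a doubled domain carrying switching information) is precisely where Kim and Siggers had to invoke the general dichotomy theorem of~\cite{bulatov,zhuk}; you have not verified that your minimum operation is a polymorphism of \emph{that} structure. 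For the same reason, your claim that one may simply ``assign each vertex the $<$-minimum surviving value in its list after establishing consistency'' is false as stated: that assignment is a list homomorphism of the underlying graphs preserving bicoloured edges, but it may send an odd unicoloured closed walk $R$ of $\widehat{G}$ onto an all-blue closed walk $M$ of $\widehat{H}$, and then it is not a signed homomorphism at all. The paper's algorithm in Section~\ref{polybip} is an \emph{iterated} procedure: when this failure occurs it deletes the vertices of $M$ from the lists of the vertices of $R$ and reruns consistency, and its correctness rests on the nontrivial theorem that $f(C)$ and $f'(C)$ are disjoint whenever $f\le f'$ pointwise, $f(C)$ is all blue, and $f'(C)$ contains a bicoloured edge. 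That disjointness theorem, proved through several observations exploiting the special ordering, is the real content of the polynomial-time claim, and nothing in your proposal substitutes for it.

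Finally, for the chain case of NP-completeness you offer only a sketch of a gadget and yourself flag it as the step requiring the most care; as written this is a plan, not a proof. Note that the paper does not construct such a reduction either --- it treats the hardness results as known and cites them --- so the honest alternatives are to do the same, or to carry out your gadget construction in full, which would be new work beyond what the paper contains.
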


The NP-completeness results are known~\cite{feder1999list,feder2012interval,esa}, and 
the polynomial time algorithm is presented in the next section.

\section{A polynomial time algorithm for the bipartite case}\label{polybip}

Kim and Siggers have proved that the list homomorphism problem for weakly balanced bipartite or reflexive signed graphs with a special min ordering is polynomial time solvable. Their proof however depends on the dichotomy theorem ~\cite{bulatov,zhuk}, and is algebraic in nature. We provide simple direct low-degree algorithms that effectively use the special min ordering. In this section we describe the bipartite case, a later section deals with the reflexive case.

We begin by a review of the usual polynomial time algorithm to solve the list homomorphism problem to a bipartite graph $H$ with a min ordering~\cite{fv}, cf.~\cite{hn1}. Recall that we assume $H$ has a bipartition $A, B$.

Given an input graph $G$ with lists $L(v) \subseteq V(H), v \in V(G)$, we may assume $G$ is bipartite (else there is no homomorphism at all), with a bipartition $U, V$, where lists of vertices in $U$ are subsets of $A$, and lists of vertices in $V$ are subsets of $B$. We first perform a consistency test, which reduces the lists $L(v)$ to $L'(v)$ by repeatedly removing from $L(v)$ any vertex $x$ such that for some edge $vw \in E(G)$ no $y \in L(w)$ has $xy \in E(H)$. If at the end of the consistency check some list is empty, there is no list homomorphism. Otherwise, the mapping $f(v) = \min L(v)$ (in the min ordering) ensures $f$ is a homomorphism (because of the min ordering property~\cite{hn1}).

We will apply the same logic to a weakly balanced bipartite signed graph $\widehat{H}$; we assume that $\widehat{H}$ has been switched to have no purely red edges. If the input signed graph $\widehat{G}$ is not bipartite, we may again conclude that no homomorphism exists, regardless of lists. Otherwise, we refer to the alternate definition of a homomorphism of signed graphs, and seek a list homomorphism $f$ of the underlying graph of $\widehat{G}$ to the underlying graph of $\widehat{H}$, that:
\begin{itemize}
\item
maps bicoloured edges of $\widehat{G}$ to bicoloured edges of $\widehat{H}$, and 
\item
maps unicoloured closed walks in $\widehat{G}$ that have an odd number of red edges to closed walks in $\widehat{H}$ that include bicoloured edges. 
\end{itemize}
Indeed, as observed in the first section, this is equivalent to having a list homomorphism of $\widehat{G}$ to $\widehat{H}$, since $\widehat{H}$ does not have unicoloured closed walks with any purely red (i.e., negative) edges.

The above basic algorithm can now be applied to the underlying graphs; if it finds there is no list homomorphism, we conclude there is no list homomorphism of the signed graphs either. However, if the algorithm finds a list homomorphism of the underlying graphs which takes a closed walk $R$ with odd number of red edges to a closed walk $M$ with only purely blue edges edges, we need to adjust it. (As noted in the introduction, Zaslavsky's algorithm will identify such a closed walk if one exists.) Since the algorithm assigns to each vertex the smallest possible image (in the min ordering), we will remove all vertices of $M$ from the lists of all vertices of $R$, and repeat the algorithm. The following result ensures that vertices of $M$ are not needed for the images of vertices of $R$.

\begin{theorem}
Let $\widehat{H}$ be a weakly balanced bipartite signed graph with a special min ordering $\leq$.

Suppose $C$ is a closed walk in $\widehat{G}$ and $f, f'$ are two homomorphisms of $\widehat{G}$ to $\widehat{H}$ such that $f(v) \leq f'(v)$ for all vertices $v$ of $\widehat{G}$, and such that $f(C)$ contains only blue edges but $f'(C)$ contains a bicoloured edge.

Then the homomorphic images $f(C)$ and $f'(C)$ are disjoint.
\end{theorem}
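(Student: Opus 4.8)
The plan is to reduce the statement to a purely combinatorial claim about the underlying bipartite graph $H$ equipped with its special min ordering $\le$, and then to rule out a common image vertex by a minimal-counterexample argument in the spirit of the proof of Theorem~\ref{3}. Write $C = v_0 v_1 \cdots v_m$ with $v_m = v_0$, and set $a_i = f(v_i)$, $b_i = f'(v_i)$; by hypothesis $a_i \le b_i$ for every $i$ (the comparison taking place on whichever side of $H$ the vertex $v_i$ is mapped into), every edge $a_i a_{i+1}$ is unicoloured (blue), and some edge $b_j b_{j+1}$ is bicoloured. Since $f$ sends bicoloured edges to bicoloured edges while $f(C)$ is all blue, the walk $C$ is itself unicoloured in $\widehat{G}$; this is not needed directly, but it confirms that the whole question lives in the underlying graph. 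The two tools I will use are (i) the defining property of a special min ordering -- at each vertex its bicoloured neighbours precede its unicoloured neighbours -- and (ii) the elementary fact that the coordinatewise minimum of two equal-length walks whose $i$-th vertices always lie on the same side is again a walk, an immediate consequence of the min-ordering inequality.

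First I would establish the local strict inequalities. Look at the bicoloured edge $b_j b_{j+1}$ together with the blue edge $a_j a_{j+1}$. If $a_j = b_j$, then this common vertex has the unicoloured neighbour $a_{j+1}$ and the bicoloured neighbour $b_{j+1}$, so the special property forces $b_{j+1} < a_{j+1}$, contradicting $a_{j+1} \le b_{j+1}$; hence $a_j < b_j$, and symmetrically $a_{j+1} < b_{j+1}$. The same reasoning shows more generally that whenever $a_i = b_i$ both $b$-edges incident with that vertex must be blue, since a bicoloured $b$-edge at a coincidence would again place a bicoloured neighbour after a unicoloured one. Thus coincidences of the two walks can occur only along purely blue stretches of the $b$-walk, while the bicoloured edge guarantees at least one position of genuine strict separation.

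The heart of the proof is to promote this to full disjointness of the vertex sets $\{a_i\}$ and $\{b_i\}$. I would argue by contradiction: suppose some vertex $h = a_p = b_q$ (necessarily $p \equiv q \bmod 2$). The two sub-walks of the $a$- and $b$-walks running between occurrences of $h$, read off through the pairs $(a_i,b_i)$ and their reverses $(b_i,a_i)$, should trace a directed closed walk in the pair digraph $H^+$ passing through a pair and its reverse -- that is, an invertible pair -- which is impossible because $H$ has a min ordering (Theorem~\ref{3}); alternatively, the coincidence produces a circuit inside the set $D$ of Section~\ref{Bip}, contradicting the fact that the special min ordering extends $D$ without circuits (Corollary~\ref{ext}). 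To make either contradiction go through I will feed the two walks through the coordinatewise-minimum operation and the strict inequalities of the previous step, using the bicoloured edge to force the required separation at one point of the cycle.

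The step I expect to be the main obstacle is verifying that consecutive pairs really do form arcs of $H^+$: pointwise domination $a_i \le b_i$ does not by itself guarantee that $(a_i,b_i)$ dominates $(a_{i+1},b_{i+1})$, since the chord $a_i b_{i+1}$ could be an edge and destroy the arc. I plan to circumvent this exactly as in the minimal-circuit analysis of Theorem~\ref{3}: choose $h$ and the two occurrences realising a shortest coincidence (equivalently a shortest offending closed walk), so that any such chord would either shorten the coincidence or immediately exhibit a smaller circuit in $D$, in both cases contradicting minimality. Handling this bookkeeping -- and keeping track of edge colours so that the special property can be invoked wherever a bicoloured $b$-edge meets the $a$-walk -- is where the real work lies; once the arcs are secured, the invertible pair (or the circuit in $D$) falls out and the disjointness of $f(C)$ and $f'(C)$ follows.
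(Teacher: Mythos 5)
Your opening step (the strict inequalities $a_j<b_j$ and $a_{j+1}<b_{j+1}$ at the bicoloured edge) is correct and coincides with the paper's Observation~\ref{ONE}, but the core of your plan --- deriving an invertible pair or a circuit in $D$ from a common vertex --- cannot be completed, and the obstacle is not bookkeeping. The invertible-pair half is dead on arrival: in any graph with a min ordering, domination in $H^+$ preserves the orientation of a pair. Indeed, if $(x,y)$ dominates $(x',y')$ and $x<y$ but $y'<x'$, the min-ordering property applied to the edges $xx'$, $yy'$ would force the edge $xy'$, which domination forbids; hence $x'<y'$. Every pair $(a_i,b_i)$ you use satisfies $a_i\le b_i$, so every pair reachable from them by a directed walk in $H^+$ again has first coordinate below second, and such a walk can never pass through the reverse of any such pair. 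A coincidence $h=a_p=b_q$ does not create any reversal either, since it only gives $a_q\le h\le b_p$. So the ``directed closed walk through a pair and its reverse'' you hope to exhibit provably does not exist, shared vertex or not. The $D$-circuit half fares no better: membership in $D_0$ requires a common neighbour $z$ with $zx$ bicoloured and $zy$ blue, which in your configuration would require a coincidence $a_j=b_j$ at the bicoloured edge --- exactly what your first step rules out --- and no mechanism is offered for placing any pair $(a_i,b_i)$ in $D$. More fundamentally, $D$-circuits and invertible pairs are intrinsic to $\widehat{H}$ and none exist (the given special min ordering extends $D$ and excludes invertible pairs), whereas the hypothesis that actually drives the theorem, the pointwise inequality $f(v)\le f'(v)$, is order information that these structures cannot encode. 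Finally, your minimality repair for the missing chords has no force: in the proof of Theorem~\ref{3} a chord yields a shorter circuit inside a domination-closed set, but here an edge $a_ib_{i+1}$ yields neither a shorter coincidence between $f(C)$ and $f'(C)$ nor any circuit, so there is nothing for minimality to contradict.

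What is missing is a tool that lets the ordering itself, rather than $H^+$, carry the contradiction. The paper's proof stays entirely order-theoretic: after the step corresponding to your strict inequalities, it proves that a walk of blue edges cannot ``jump over'' a bicoloured edge (Observations~\ref{THREE} and~\ref{FOUR}: if a blue walk starts at $a_0,b_0$ with $a_0<d$ and $b_0<e$ for a bicoloured edge $de$, it cannot reach a vertex $c>d$), by induction along the walk, using the min-ordering property to create a crossing edge and the special property to pin down its colour. Then, taking $g$ to be the \emph{largest} common vertex of $f(C)$ and $f'(C)$, the walks inside $f(C)$ and $f'(C)$ joining $g$ to the edges $ab$ and $uv$ of Observation~\ref{ONE} violate that lemma in each of the cases $g>u$, $g=u$, $g<u$. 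Some lemma of this kind, converting the pointwise inequality plus the edge colours into a constraint on walks in the ordering, appears unavoidable; your proposal routes around it into machinery that cannot produce the desired contradiction.
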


\begin{proof}
We begin with three simple observations.

\begin{observation}\label{ONE}
There exists a blue edge $ab \in f(C)$ and a bicoloured edge $uv \in f'(C)$ such that $a < u, b < v$.
\end{observation}

Indeed, let $u$ be the smallest vertex in $A$ incident to a bicoloured edge in $f'(C)$, and let $v$ be the smallest vertex in $B$ joined to $u$ by a bicoloured edge in $f'(C)$. Let $xy$ be an edge of $C$ for which $f'(x)=u, f'(y)=v$, and let $a = f(x), b = f(y)$. By assumption, $a=f(x) \leq f'(x)=u$ and $b=f(y) \leq f'(y)=v$. Moreover, $a \neq u$ and $b \neq v$ by the special property of min ordering.

\begin{observation}\label{TWO}
For every $r \in f'(C)$, there exists an $s \in f(C)$ with $s \leq r$.
\end{observation}

This follows from the fact that some $x$ in $\widehat{G}$ has $s = f(x) \leq f'(x) = r$.

\begin{observation}\label{THREE}
There do not exist edges $ab, bc, de$ with $a < d < c$ and $b < e$, such that $ab$ is blue and $de$ is bicoloured.
\end{observation}

Since $<$ is a min ordering, the existence of such edges would require $db$ to be an edge and the special property of $<$ at $d$ would require this edge to be bicoloured, contradicting the special property at $b$.

The following observation enhances Observation~\ref{THREE}.

\begin{observation}\label{FOUR}
There does not exist a walk $a_0b_0, b_0a_1, a_1b_1, \dots, b_kc$ of blue edges, and a bicoloured edge $de$ such that $a_0 < d < c$ and $b_0 < e$.
\end{observation}

This is proved by induction on the (even) length $k$. Observation~\ref{THREE} applies if $k=0$. For $k > 0$, Observation ~\ref{THREE} still applies if $a_0 < d < a_1$ (using the blue walk $a_0b_0, b_0a_1$ and the bicoloured edge $de$). If $d > a_1$, we can apply the induction hypothesis to $a_1 < d < c$ and $de$ as long as $b_1 < e$. The special property of $<$ ensures that $b_1 \neq e$. Finally, if $e < b_1$, then Observation~\ref{THREE} applies to the edges $b_0a_1, a_1b_1, ed$.

Having these observations, we can now prove the conclusion. Indeed, suppose that $f(C)$ and $f'(C)$ have a common vertex $g$. Let us take the largest vertex $g$, and by symmetry assume it is in $A$, like $a, u$, where $a, b, u, v$ are the vertices from Observation~\ref{ONE}. Recall that we have chosen $u$ to be the smallest vertex in $A$ incident with a bicoloured edge of $f'(C)$, and $v$ is smallest vertex in $B$ adjacent to $u$ by a bicoloured edge in $f'(C)$.

Suppose first that $g > u$. In $f(C)$ there is a path with edges $ab, ba_1, \dots, hg$ which has $a < u < g$ and $b < v$, contradicting Observation~\ref{FOUR}.

If $g=u$ then the path with edges $ba, ab_1, b_1a_1,\dots, a_kh, hg$ in $f(C)$ has all edges blue, and thus $h > v$ as $<$ is special. Therefore $b < v < h$ and $a < g$, also contradicting Observation~\ref{FOUR}.

Finally, suppose that $g < u$. Here we use the path in $f'(C)$ with edges $gv_1$, $v_1u_1$, $u_1v_2$, $\dots, u_{k-1}v_k, v_ku, uv.$ A small complication arises if $v_1 > v$, so extend the path to also include $ab$ by preceding it with the path in $f(C)$ with edges $ab, ba_1, a_1b_1, b_1a_2, \dots, b_tg$. Of course the result is now a walk $W$, not necessarily a path. Note that the first edges of $W$ are blue (being in $f(C)$), but the last edge $uv$ is bicoloured. 

If $uv$ is the first bicoloured edge, then $v < v_k$ by the special property, and we have $b < v < v_k$ and $a < u$, a contradiction with Observation~\ref{FOUR}.
Otherwise, the first bicoloured edge on the walk must be some $u_jv_{j+1}$ (where $v_ju_j$ is unicoloured and $u_j \neq u$), or some $v_ju_j$ (where $u_{j-1}v_j$ is unicoloured). 

In the first case, where $u_jv_{j+1}$ is the first bicoloured edge, $u_j > u$ by the definition of $u$. Then $a < u < u_j$ and $b < v$, implying again a contradiction with Observation~\ref{FOUR}. 
In the second case, where $v_ju_j$ is the first bicoloured edge, we have again $a < u \leq u_j < u_{j-1}$ (using the special property at $v_j$), and therefore we have $a < u < u_{j-1}$ and $b < v$ contrary to Observation~\ref{FOUR}.
\end{proof}

We observe that each phase removes at least one vertex from at least one list, and since $\widehat{H}$ is fixed, the algorithm consists of $O(n)$ phases of arc consistency, where $n$ is the number of vertices (and $m$ number of edges) of $\widehat{G}$. Since arc consistency admits an $O(m+n)$ time algorithm, our overall algorithm has complexity $O(n(m+n))$.

\section{Obstructions to min orderings of weakly balanced reflexive signed graphs}\label{refl}

Here we briefly outline the proof in the reflexive case. Recall that we mentioned that the proof of Theorem 3.2 in~\cite{feder2012interval} can be seen to imply the following corollary analogous to Corollary \ref{ext} proved earlier. 
(Recall that a circuit in $D$ is a set of pairs $(x_0,x_1), (x_1,x_2), \dots, (x_n,x_0) \in D$.)

\begin{corollary}\label{Rext}
Suppose $D$ is a set of pairs of vertices of a reflexive graph $H$, such that
\begin{enumerate}
\item if $(x,y) \in D$  and $(x,y)$ dominates $(x',y')$ in $H^+$, then  $(x',y') \in D$, and
\item $D$ has no circuit.
\end{enumerate}
Then there exists a min ordering $<$ of $H$ such that $x < y$ for each $(x,y) \in D$ 
if and only if $H$ has no invertible pair.
\end{corollary}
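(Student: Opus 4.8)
The plan is to prove the two directions separately, concentrating the real work on sufficiency, which I would obtain by running the iterative ``ripe strong component'' construction from the proof of Theorem~\ref{3} on the reflexive pair digraph, seeded with the prescribed set $D$.

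\emph{Necessity} needs neither hypothesis on $D$. If $H$ has an invertible pair $(u,u')$, then $(u,u')$ and $(u',u)$ lie in one strong component of $H^+$, so there is a directed path from $(u,u')$ to $(u',u)$. Any min ordering $<$ of $H$ determines the set $D_< = \{(x,y) : x < y\}$, which one checks is closed under domination: if $(x,y)$ dominates $(x',y')$ then $xx',yy' \in E(H)$ and $xy' \notin E(H)$, so the reflexive min ordering property forces $x' < y'$ whenever $x < y$. Applying this closure along each arc of the path from $(u,u')$ to $(u',u)$ would place both pairs in $D_<$, i.e. $u < u'$ and $u' < u$, which is absurd. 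Hence no min ordering exists at all, and in particular none extending $D$.

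\emph{Sufficiency.} Assume $H$ has no invertible pair and that $D$ satisfies conditions 1 and 2. I would first record the reflexive analogues of the structural facts from Section~2: $H^+$ is skew symmetric, that is, $(x,y)$ dominates $(x',y')$ if and only if $(y',x')$ dominates $(y,x)$ (immediate since $H$ is undirected); consequently strong components come in coupled reversed pairs $C, C'$, and $(x,y)$ is invertible exactly when its component is self-coupled. The hypotheses on $D$ say precisely that, setting $D' = \{(y,x) : (x,y) \in D\}$, the pair $(D, D')$ already satisfies the invariants (i)--(iv) of the proof of Theorem~\ref{3}: there is no circuit by condition 2 (which in particular rules out length-two circuits, so $D \cap D' = \emptyset$); each strong component lies entirely in $D$, in $D'$, or in neither, and $D'$ consists exactly of the reversed pairs of $D$, both by condition 1 together with skew symmetry; and there is no arc leaving $D$, which is condition 1 itself.

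From this seed I would run the same greedy procedure, repeatedly adding a currently ripe strong component to $D$ and its coupled component to $D'$, preserving (i)--(iv), until every pair of distinct vertices lies in $D$ or $D'$. The final $D$ is transitive and reversal-complete, so $x < y \iff (x,y) \in D$ is a linear order; condition 1, now holding for the completed $D$, makes it a min ordering, and it extends the original $D$ by construction. The crux, and the only genuine obstacle, is the guarantee that a ripe component can always be added without creating a circuit: when a tentative choice closes a shortest circuit, that circuit must expose an alternative ripe component -- a sink pair or the coupled component -- that can be inserted safely. This is exactly the case analysis carried out in the proof of Theorem~3.2 of~\cite{feder2012interval}, the reflexive counterpart of our Cases~1--3, and I would invoke it directly. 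The single point to verify is that starting from a nonempty seed causes no difficulty; this is immediate, since the seed is chosen to satisfy all four invariants, which is precisely why the extension statement drops out of the existing proof.
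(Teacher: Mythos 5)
Your proposal is correct and follows essentially the same route as the paper: the paper also disposes of this corollary by observing that the proof of Theorem~3.2 in~\cite{feder2012interval} already yields the extension statement, which is precisely the step you invoke for the crux (that a ripe strong component can always be added without creating a circuit). Your additional scaffolding --- the necessity direction via closure of $D_< = \{(x,y) : x<y\}$ under domination, and the check that the seed $D$, $D'$ satisfies the invariants (i)--(iv) of the proof of Theorem~\ref{3} --- is accurate and merely makes explicit what the paper leaves to the citation.
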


Having this in hand, it only remains to show that Theorem \ref{flowers} applies to reflexive signed graphs as well. In fact, the proof is unchanged. We again define $D_0$ to consist of all pairs $(x,y)$ such that for some vertex $z$ there is a bicoloured edge $zx$ and a blue edge $zy$, and let $D$ be the reachability closure of $D_0$. A min ordering of $H$ is a special min ordering of $\widehat{H}$ if and only if each pair $(x,y) \in D$ has $x < y$. The proof of the fact that each flower contains a chain given in Section \ref{Bip} applies word for word in the reflexive case as well.

\begin{theorem}\label{mainer}
A weakly balanced reflexive signed graph $\widehat{H}$ has a special min ordering
if and only if it has no chain and no invertible pair. If $\widehat{H}$ has a special min ordering,
then the list homomorphism problem for $\widehat{H}$ can be solved in polynomial time. 
Otherwise $\widehat{H}$ has a chain or an invertible pair and the list homomorphism problem 
for $\widehat{H}$ is NP-complete.
\end{theorem}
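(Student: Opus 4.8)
The plan is to obtain Theorem~\ref{mainer} by exactly the assembly that produced its bipartite counterpart Theorem~\ref{main}, now feeding the reflexive extension result Corollary~\ref{Rext} in place of Corollary~\ref{ext}, and to settle the two complexity statements by citing the direct algorithm for the reflexive case together with the known hardness results. Throughout I work with $\widehat{H}$ switched so that it has no purely red edges, with underlying reflexive graph $H$, and with $D$ the reachability closure of $D_0$, defined exactly as in Section~\ref{Bip}.

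For the backward direction of the equivalence I would assume $\widehat{H}$ has neither a chain nor an invertible pair and produce a special min ordering. The set $D$ satisfies hypothesis~1 of Corollary~\ref{Rext} by construction, since a reachability closure is closed under domination. To verify hypothesis~2, I invoke the reflexive version of Theorem~\ref{flowers}: as already observed, the flower-to-chain argument of Lemma~\ref{lem:flowerchain} carries over word for word to reflexive signed graphs, so the absence of a chain forces the absence of a flower, and since every circuit of $D$ arises from a flower, $D$ has no circuit. Both hypotheses of Corollary~\ref{Rext} now hold, and because $\widehat{H}$ has no invertible pair that corollary yields a min ordering $<$ of $H$ with $x < y$ for every $(x,y) \in D$. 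Such an ordering, extending $D$, is precisely a special min ordering of $\widehat{H}$.

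For the forward direction I would assume a special min ordering $<$ and rule out both obstructions. Since $<$ is in particular a min ordering of the underlying reflexive graph, and since (as noted in the introduction) an invertible pair is incompatible with any min ordering, $\widehat{H}$ has no invertible pair. A chain is a flower with two petals, whose terminal pairs $(l^{(1)},u^{(1)})$ and $(l^{(2)},u^{(2)})$ lie in $D$ and satisfy $u^{(1)}=l^{(2)}$ and $u^{(2)}=l^{(1)}$; a special min ordering would then force $l^{(1)} < u^{(1)} = l^{(2)} < u^{(2)} = l^{(1)}$, which is impossible. Hence $\widehat{H}$ has no chain, completing the equivalence.

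The two complexity statements then follow immediately. When a special min ordering exists, the direct low-degree algorithm for reflexive signed graphs developed in the following section (the reflexive analogue of the algorithm of Section~\ref{polybip}) solves the list homomorphism problem in polynomial time. When no special min ordering exists, the equivalence just proved guarantees a chain or an invertible pair, and NP-completeness follows from the known results~\cite{feder1999list,feder2012interval,esa}. I expect no genuine obstacle inside this proof itself: all the substantive work lies in the two inputs that the preceding discussion supplies --- transferring the extension theorem to the reflexive setting as Corollary~\ref{Rext}, and checking that the flower-chain machinery of Section~\ref{Bip} survives unchanged --- after which Theorem~\ref{mainer} is a routine combination.
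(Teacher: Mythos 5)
Your proposal is correct and follows essentially the same route as the paper: define $D$ as the reachability closure of $D_0$, note that the flower-to-chain argument of Lemma~\ref{lem:flowerchain} (and hence Theorem~\ref{flowers}) carries over verbatim to the reflexive setting, feed the resulting circuit-free $D$ into Corollary~\ref{Rext} to get the special min ordering, and settle the complexity claims by the direct reflexive algorithm and the known NP-completeness results. The only difference is cosmetic --- you spell out the forward direction (chain as a two-petal flower forcing a circular implication, invertible pair blocking any min ordering), which the paper leaves implicit.
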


We have the NP-complete cases from~\cite{feder1999list,feder2012interval}, and we provide the polynomial algorithms in the next section.

\section{A polynomial time algorithm for the reflexive case}

As in the bipartite case, the polynomiality is known for the cases with special min ordering \cite{KS}. However, the algorithm of \cite{KS} is not direct and depends on the dichotomy theorem of ~\cite{bulatov,zhuk}, which uses deep results in universal algebra. We provide a simple direct polynomial algorithm along the lines of the bipartite case. The complexity of the algorithm is similar to the bipartite case, $O(n(m+n))$.

\begin{theorem}
Let $\widehat{H}$ be a weakly balanced reflexive signed graph with a special min ordering~$\leq$.
Suppose $C$ is a closed walk in $\widehat{G}$ and $f, f'$ are two homomorphisms of $\widehat{G}$ to $\widehat{H}$ such that $f(v) \leq f'(v)$ for all vertices $v$ of $\widehat{G}$, and such that $f(C)$ contains only blue edges but $f'(C)$ contains a bicoloured edge.

Then the homomorphic images $f(C)$ and $f'(C)$ are disjoint.
\end{theorem}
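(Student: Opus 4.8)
The statement is the exact reflexive analogue of the theorem just proved in Section~\ref{polybip}, so the plan is to follow that proof line for line, substituting the reflexive min ordering and its special property for the bipartite ones. The only structural facts the bipartite argument uses are that $f(v)\le f'(v)$ for every vertex of $\widehat{G}$, that $\le$ is a min ordering (if $pq,p'q'\in E(H)$ with $p<p'$ and $q'<q$, then $pq'\in E(H)$), and that at each vertex the bicoloured neighbours precede the unicoloured ones. All three hold for a reflexive special min ordering, so I expect the same four observations and the same concluding case analysis to carry over.

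First I would reprove the analogues of Observations~\ref{ONE}--\ref{FOUR}. In Observation~\ref{ONE} the only use of the bipartition is in selecting the witnessing edges; I replace it by taking $u$ to be the $\le$-smallest vertex incident to a bicoloured edge of $f'(C)$ and $v$ the $\le$-smallest vertex joined to $u$ by such an edge, then choosing an edge $xy$ of $C$ with $f'(x)=u$, $f'(y)=v$ and setting $a=f(x)$, $b=f(y)$; the inequalities $a\le u$, $b\le v$ come from $f\le f'$ and are upgraded to strict inequalities exactly as before, using the special property at $u$ and at $v$. Observation~\ref{TWO} is unchanged. In Observations~\ref{THREE} and~\ref{FOUR} the min-ordering implication forces the auxiliary edge $db$ to exist, the special property forces it to be bicoloured, and the induction on the (even) length of the blue walk, shortening by two edges from the front, proceeds verbatim; the only point needing a second look is that these manipulations remain valid when an image edge $f(x)f(y)$ or $f'(x)f'(y)$ happens to be a loop, which the reflexive setting permits.

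With the observations in place, the conclusion follows as before: assuming $f(C)$ and $f'(C)$ meet, let $g$ be the $\le$-largest common vertex and split into $g>u$, $g=u$, and $g<u$, in each case producing a blue walk (drawn from $f(C)$, or an $f(C)$-prefix continued inside $f'(C)$) together with a bicoloured edge that contradicts the reflexive Observation~\ref{FOUR}. The genuine obstacle here is that the bipartite proof silently uses the bipartition twice: once in the ``by symmetry assume $g\in A$'' reduction, and once in the fact that any walk between two vertices on the same side has even length, which is precisely what Observation~\ref{FOUR} demands of its blue walk (its length-one instance is in fact not an obstruction). In the reflexive setting there are no sides, so the first use disappears and the three cases are run directly in the single order, but the second forces me to guarantee that the blue walk joining the relevant vertices inside the closed walk $f(C)$ can be chosen of even length. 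I would secure this from the closed-walk structure --- both arcs of $f(C)$ between two of its vertices are available, they may be traversed repeatedly, and $g$ may have several $f$-preimages on $C$ --- falling back, if needed, on a loop at an endpoint to adjust parity. Verifying that even parity can always be arranged without disturbing the inequalities $a<u$, $b<v$ is the step I expect to require the most care.
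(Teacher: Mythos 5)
Your plan correctly identifies where the difficulty lies, but the parity problem you flag at the end is a genuine hole, not a routine verification, and the repairs you sketch do not close it. First, "falling back on a loop at an endpoint to adjust parity" is not legitimate as stated: in a reflexive signed graph the loops may be bicoloured, while the walk in Observation~\ref{FOUR} must consist of blue edges only. (One can in fact prove, using the special property, that the \emph{larger} endpoint of any blue edge must carry a blue loop --- if the loop at $z$ were bicoloured and $wz$ blue with $w<z$, then the bicoloured neighbour $z$ of $z$ fails to precede its unicoloured neighbour $w$ --- but this fact is absent from your sketch and is exactly what would be needed.) Second, re-traversing $f(C)$ or using the other arc cannot change parity when the closed walk has even length, and distinct $f$-preimages of $g$ need not supply both parities, so those fallbacks can all fail simultaneously. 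Third, even granting a blue loop, your ported Observation~\ref{ONE} is too weak: you only get the coordinate-wise inequalities $a<u$, $b<v$, with no control on how $a,b,u,v$ interleave in the single order. In the subcase where $b<a$ and the blue loop sits at $a$, prepending it makes $a$ play the role of $b_0$ in Observation~\ref{FOUR}, which then demands $a<v$ --- an inequality you cannot derive from $a<u$ and $b<v$ alone. So the endgame cases do not go through as described.

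The paper's own proof avoids parity entirely, which is why it looks quite different from the bipartite one. It first strengthens your Observation~\ref{ONE} to Observation~\ref{blue-before-bicoloured}: there exist a blue edge $ab\in f(C)$ and a bicoloured edge $uv\in f'(C)$ with $a\le b<u\le v$, i.e., the blue edge lies \emph{entirely below} the bicoloured one in the single linear order; this rests on the "no crossing" Observation~\ref{crossing}, which has no bipartite counterpart. Then, in place of the even-length induction of Observation~\ref{FOUR}, it proves the parity-free Observation~\ref{longjump}: if $ab$ is blue, $de$ bicoloured and $a\le b<d\le e$, then no blue walk of \emph{any} length leads from $b$ to a vertex $c\ge d$. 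Its proof examines the first edge of the walk that crosses the threshold $d$ and derives a contradiction from Observations~\ref{crossing} and~\ref{jumping} and the special property --- a first-crossing argument rather than a peel-two-edges induction. With these two tools the conclusion needs only the two cases $g\ge u$ and $g<u$, with no walk surgery at all. If you wish to rescue your route, the missing ingredients are precisely these: the totally-ordered strengthening of Observation~\ref{ONE}, and either the blue-loop lemma above or (better) a threshold-crossing replacement for Observation~\ref{FOUR}.
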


\begin{proof}
We will first prove a couple of observations.

\begin{observation}\label{crossing}
There do not exist vertices $a \leq c \leq b \leq d$ and edges $ab, cd$, such that $ab$ is blue and $cd$ is bicoloured.
\end{observation}

Suppose such vertices and edges did exist. By the property of min ordering, $ac$ and $bc$ must be edges. If $ac$ is blue, $c$ is not special. So $ac$ is bicoloured. If now $bc$ is blue, $c$ is not special, and if $bc$ is bicoloured, $b$ is not special and we have a final contradiction. We note that this proof applies even if some of the $\leq$ are equalities.

\begin{observation}\label{blue-before-bicoloured}
There exists a blue edge $ab \in f(C)$ with $a \leq b$, and a bicoloured edge $uv \in f'(C)$ with $u \leq v$, such that $b < u$.
\end{observation}

Indeed, let $u$ be the smallest vertex incident to a bicoloured edge in $f'(C)$, and let $v$ be the smallest vertex joined to $u$ by a bicoloured edge in $f'(C)$. Thus $u \leq v$. Let $xy$ be an edge of $C$ for which $f'(x)=u, f'(y)=v$, and let $f(x) = a, f(y) = b$. By assumption, $a=f(x) \leq f'(x)=u$ and $b=f(y) \leq f'(y)=v$. 

If $a = u$, the ordering $\leq$ is not special. Suppose $a < u \leq v$. If $b = u$, then $u$ is not special. The same applies if $b = v$. If $u < b < v$, Observation~\ref{crossing} applies. Thus $b < u$ and we are done.

\begin{observation}\label{jumping}
If there is a blue edge $ab$ and a bicoloured edge $cd$ such that $a < c \leq d < b$, then there is no blue edge $ae$ with $a < e$ and $e < c$.
\end{observation}

By the definition of a min ordering, $ac$ is an edge and by the definition of a special min ordering, it is bicoloured. In any event, $ae$ contradicts the special property at $a$.

\begin{observation}\label{longjump}
Suppose that $ab$ is a blue edge and $de$ a bicoloured edge such that $a \leq b < d \leq e$.
Then there cannot exist a blue walk from $b$ to $c$, where $d \leq c$.
\end{observation}

For a contradiction, suppose there exists such a walk. If the first edge of the walk ends in $d$, then $d$ is not special; 
and if it ends at $c$ with $d < c$, then we extend its beginning by edge $ab$.
Denote by $uv$ and $vw$ the first two edges of the walk such that $u, v < d$ and $w \geq d$. 
If $w = d$ or $w = e$, then the ordering is not special. If $d < w < e$, then we have a contradiction with Observation~\ref{crossing}. Finally, if $w > e$, we have a contradiction with Observation~\ref{jumping}.

Having these observations, we can now prove the conclusion. Indeed, suppose that $f(C)$ and $f'(C)$ have a common vertex $g$. Let us take the largest vertex $g$ and let $a, b, u, v$ be the vertices from Observation~\ref{blue-before-bicoloured}. Recall that $a \leq b$ and we have chosen $u$ to be the smallest vertex incident with a bicoloured edge of $f'(C)$, and $v$ is the smallest vertex adjacent to $u$ by a bicoloured edge in $f'(C)$ (thus $u \leq v$).

Suppose first that $g \geq u$. Then there is a blue path in $f(C)$ starting in $b$ and ending in $g \geq u$, contradicting Observation~\ref{longjump}.

Finally, suppose that $g < u$. Here we use the path in $f'(C)$ starting in $g$ and ending in $u$. Extend the beginning of this path by a path from $b$ to $u$ in $f(C)$. Thus, this is a walk from $b$ to some $x$ with $u \leq x$, contradicting Observation~\ref{longjump}.
\end{proof}

\section{Refinements and special cases}

In some cases one can be more specific about the dichotomy classification. In an earlier paper~\cite{separable} we have described the detailed structure of the polynomial cases for weakly balanced bipartite signed graphs whose unicoloured edges form a hamiltonian path or cycle. The proofs of NP-completeness given there are all based on finding suitable chains and invertible pairs; and the polynomial algorithms given there all depend on finding a special min ordering. It is interesting to observe that, while Theorem~\ref{main} can be applied for this special class of signed graphs, this does not save much of the work presented in~\cite{separable}, which consists mostly of {\em finding} the chains and the min orderings.

We now restrict our attention to weakly balanced signed bipartite graphs whose underlying graphs have a min ordering. According to our Theorem~\ref{main}, the polynomial cases are distinguished by the non-existence of a chain. It would be interesting to replace this condition by a list of forbidden induced subgraphs, as is the case for signed trees~\cite{trees}.

A {\em bipartite chain graph} is a bipartite graph in which the neighbourhoods of each part form a chain under inclusion. (This term is well established in the literature, and the word "chain" here refers to the ordering of neighbourhoods; it bears no relation to the obstructions defined earlier which we also called "chains", both here and in earlier papers.)

According to~\cite{doug}, a bipartite graph has a min ordering if and only if it is the intersection of {\em two} bipartite chain graphs with the same bipartition. As a first step towards the above goal, we offer the following forbidden list characterization in the case of {\em one} bipartite chain graph. We will use the well-known fact that a bipartite graph is a bipartite chain graph if and only if it does not contain an induced $2K_2$.

\begin{figure}
\centering
\includegraphics[scale=1.1]{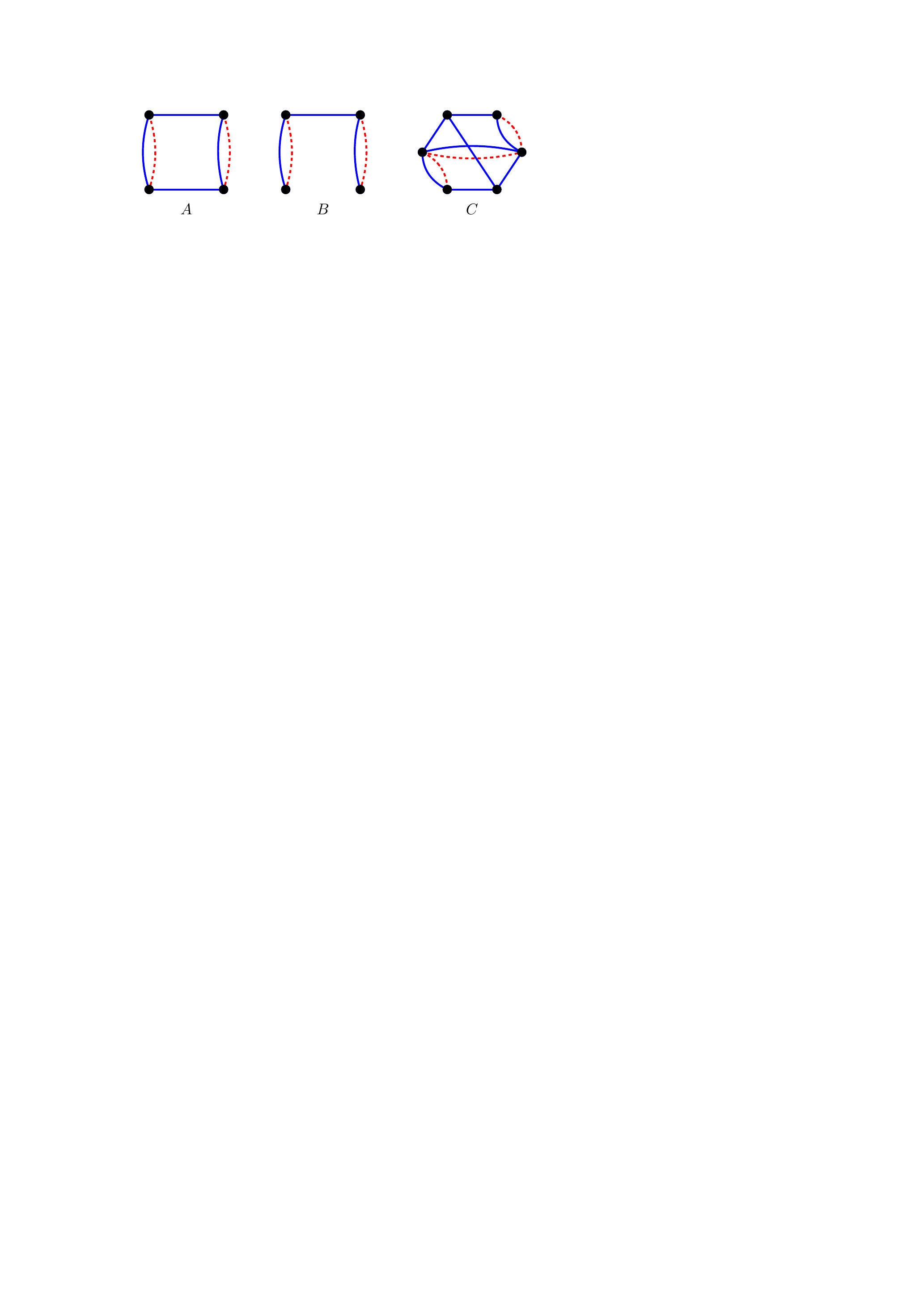}
\caption{Forbidden induced subgraphs of Theorem~\ref{thm:chaingraphs}.}
\label{fig:abc}
\end{figure}

\begin{theorem}\label{thm:chaingraphs}
Let $\widehat{H}$ be weakly balanced bipartite signed graph whose underlying unsigned graph is a bipartite chain graph.
Then $\widehat{H}$ has a special min ordering if and only if it does not have one of the three forbidden induced subgraphs in Figure~\ref{fig:abc}.
\end{theorem}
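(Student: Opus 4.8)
The plan is to prove both directions of the equivalence, with the easy direction being the forward implication. First I would verify that each of the three forbidden configurations in Figure~\ref{fig:abc} genuinely obstructs a special min ordering. By Theorem~\ref{main}, a weakly balanced bipartite signed graph has a special min ordering if and only if it contains no chain and no invertible pair; since we are assuming the underlying graph is a bipartite chain graph, it is $2K_2$-free and hence has a min ordering by the characterization via $2K_2$, so there is no invertible pair. Thus the only obstruction available is a chain, and I would exhibit, for each of the three forbidden subgraphs, an explicit chain (a pair of walks as in the definition of a chain) certifying that no special min ordering exists. Because each forbidden subgraph is small and fixed, these chains should be short and can be checked directly against the definition; this handles the ``only if'' part.

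For the converse (the harder direction), I would assume $\widehat{H}$ contains none of the three forbidden induced subgraphs and show it has a special min ordering, equivalently (again by Theorem~\ref{main}, using $2K_2$-freeness to rule out invertible pairs) that $\widehat{H}$ contains no chain. The natural strategy is contrapositive: suppose $\widehat{H}$ has a chain and derive one of the three forbidden induced subgraphs. I would take a chain consisting of walks $U\colon u=u_0,u_1,\dots,u_k=v$ and $D\colon u=d_0,d_1,\dots,d_k=v$, chosen of \emph{minimum length} $k$, and exploit the bipartite chain graph structure heavily: since neighbourhoods on each side are linearly ordered by inclusion, the non-edges $d_iu_{i+1}$ forced by the chain definition translate into strict inclusions among neighbourhoods, and the $2K_2$-free condition forces a great deal of comparability. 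The key observation is that a bipartite chain graph is extremely rigid---the vertices on each side can be ordered by neighbourhood inclusion---so a long chain must either collapse (contradicting minimality) or pin down a bounded-size induced configuration, which must then coincide with one of the three forbidden graphs.

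The main obstacle I expect is the length-reduction argument showing that a minimal chain in a bipartite chain graph cannot be long. Concretely, for a chain of length $k\ge 3$ I want to use the inclusion-ordering of neighbourhoods to find a ``shortcut'': for instance, comparing $N(d_i)$ with $N(d_{i+1})$ and $N(u_i)$ with $N(u_{i+1})$ along the chain, the bicoloured-versus-unicoloured structure of the end edges ($uu_1,d_{k-1}v$ unicoloured and $ud_1,u_{k-1}v$ bicoloured) together with the missing edges $d_iu_{i+1}$ should let me splice the two walks together to produce a strictly shorter chain, contradicting minimality. This would force $k\in\{1,2\}$, and the small residual cases, analyzed by hand using the placement of bicoloured and unicoloured edges relative to the neighbourhood ordering, yield exactly the three subgraphs of Figure~\ref{fig:abc}. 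The delicate point is tracking the signs (bicoloured vs.\ unicoloured) through the reduction, since the chain definition distinguishes ``$d_iu_{i+1}$ not an edge'' from ``$d_iu_{i+1}$ not a bicoloured edge'' in its two alternative clauses; I would treat these two regimes separately, but in each the $2K_2$-freeness is the engine that prevents the chain from persisting at length $\ge 3$.
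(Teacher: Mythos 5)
Your high-level frame is sound: by Theorem~\ref{main}, and since a bipartite chain graph always has a min ordering (so no invertible pair), the theorem reduces to showing that $\widehat{H}$ has a chain if and only if it contains an induced copy of one of the three graphs of Figure~\ref{fig:abc}. Your forward direction is fine --- each of $A$, $B$, $C$ does contain a chain (note that for $B$ the chain must revisit vertices, e.g.\ $U = a,b,d,b$ and $D = a,c,a,b$, which the walk-based definition permits). The genuine gap is in your converse: the engine of your argument is the claim that a minimum-length chain in a $2K_2$-free bipartite signed graph can be spliced down to length $k \le 2$, and this claim is false. The third forbidden graph $C$ is itself the counterexample. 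Using the paper's labels, $C$ is $K_{3,3}$ minus the edge $cd$, with bicoloured edges $ac, af, df$ and all other edges unicoloured. Its underlying graph is a bipartite chain graph, and it contains the length-$3$ chain $U = a,b,d,f$, $D = a,c,e,f$ (check: $ab$ unicoloured, $ac$ bicoloured, $df$ bicoloured, $ef$ unicoloured, $bd$ and $ce$ edges, $cd$ a non-edge). But a chain of length $2$ is exactly an alternating bicoloured/unicoloured $4$-cycle, i.e.\ an induced copy of $A$, and $C$ has none: its three bicoloured edges form the path $c,a,f,d$, and every $4$-cycle of $C$ either fails to alternate or would need the missing edge $cd$. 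So taking $\widehat{H} = C$, your minimal chain has length $3$ and no shortcut argument can reduce it; your proof breaks on this instance.

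This failure is structural, not just a fixable detail: chains of length $\le 2$ yield only the graph $A$, so a proof that first forces $k \in \{1,2\}$ and then analyzes residual small cases could never produce $B$ or $C$, i.e.\ it could not recover the correct forbidden list. What is true --- and what the paper's proof does --- is different in kind. One fixes an \emph{arbitrary} chain, with no minimality assumption, and analyzes only its first three levels $a,b,c,d,e,f$ (walks $a,b,d,f,\dots$ and $a,c,e,g,\dots$). The $2K_2$-freeness is used not to shorten the chain but to force \emph{additional} edges to be present: since $cd$ is a non-edge, the vertex sets $\{c,e,d,f\}$, $\{a,c,d,f\}$ and $\{b,d,c,e\}$ would induce $2K_2$'s unless $ef$, $af$ and $be$ are edges. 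The sign constraints then drive a short case analysis: if $bd$ is bicoloured one gets $A$ or $B$ immediately; otherwise $df$ must be bicoloured, and depending on the colours of $ce$, $af$, $be$ one gets $B$, $B$, $A$, or finally the six-vertex configuration $C$. So the bounded induced configuration you hoped to relegate to a ``residual case'' is in fact the heart of the matter, and it is reached by growing the picture near the start of the chain rather than by shrinking the chain.
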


\begin{proof}
Consider a chain in $\widehat H$ with the walk $U$ being $a,b,d,f,\ldots$ and the walk $L$ being $a,c,e,g,\ldots$. Without loss of generality, let us say that $a$ is a black vertex.

We have $b \neq c$, since $a$ is incident to $b$ with unicoloured edge and to $c$ with bicoloured edge. We also have $a \neq d$ because $ac$ is bicoloured and $cd$ is unicoloured or missing. Furthermore, $b$ and $c$ are white, while $a$ and $d$ are black. Thus all vertices $a,b,c,d$ are different.

If $bd$ is a bicoloured edge, then either $cd$ is a unicoloured edge, and then we have the graph $A$ present, or $cd$ is a non-edge, and then we have the graph $B$ present. Therefore, $bd$ has to be unicoloured; moreover, $cd$ is missing by the definition of chain.

Suppose that $df$ is a unicoloured edge. From the definition of chain we have that $e$ is not adjacent to $f$. Because of the edges incident to $d$, we have $f \neq c$. We also have $d \neq e$ as there is an edge between $c$ and $e$ but no edge between $c$ and $d$. Note that $c, f$ are both white, and $d, e$ are both black. Thus, $df$ is not the same edge as $ce$ and there is an induced $2K_2$ in $H$. Therefore $df$ is bicoloured; $eg$ is also bicoloured and $ef$ is unicoloured.

Recall that $a, d, e$ are black and $b, c, f$ are white. If $ce$ is a bicoloured edge, then $c, e, f, d$ would induce a copy of graph $B$. (Note that $c \neq f$ because of the adjacencies with $e$, and $d \neq e$ because of the adjacencies with $f$.) Thus $ce$ is a unicoloured edge. 

Observe that $a, d, e$ are different because of adjacencies with $c$ and $b, c, f$ are different because of adjacencies with $d$. Since $a, c, d, f$ do not induce a $2K_2$, the vertices $a, f$ must be adjancet. If the edge $af$ is unicoloured, then $c, a, f, d$ induce a copy of graph $B$. Thus, $af$ must be bicoloured. Also, $be$ must be an edge, otherwise $b, d$ and $c, e$ would induce a $2K_2$. If $be$ is bicoloured, then $a, b, e, c$ is $A$. Therefore, $be$ is unicoloured and $a, b, c, d, e, f$ induce a copy of $C$. This concludes the proof.
\end{proof}

\section*{Acknowledgements}

J. Bok and N. Jedličková were supported by GAUK 370122 and European Union’s Horizon 2020 project H2020-MSCA-RISE-2018: Research and Innovation Staff Exchange. R. Brewster and P. Hell gratefully acknowledge support from the NSERC Canada Discovery Grant programme. A. Rafiey gratefully acknowledges support from the grant NSF1751765.

We thank Reza Naserasr and Mark Siggers for helpful discussions.

\bibliographystyle{splncs04}
\bibliography{bibliography}

\end{document}